\theoremstyle{plain}
\newtheorem{theorem}{Theorem}
\newtheorem{lemma}{Lemma}
\newtheorem{remark}{Remark}[section]
\newtheorem{corollary}{Corollary}[section]
\newtheorem{proposition}{Proposition}[section]
\theoremstyle{proof}
\theoremstyle{definition}
\newtheorem{thma}{Theorem}
\numberwithin{equation}{section}
\numberwithin{lemma}{section}
\numberwithin{theorem}{section}
\theoremstyle{thmrm}
\begin{document}
\title[On the structure of order $4$ class groups of $\mathbb{Q}(\sqrt{n^2+1})$]{ 
On the structure of order $4$ class groups of $\mathbb{Q}(\sqrt{n^2+1})$}

\author{Kalyan Chakraborty, Azizul Hoque and Mohit Mishra}
\address{Kalyan Chakraborty @Harish-Chandra 
Research Institute, HBNI, Chhatnag Road, Jhunsi, Allahabad 211 019, 
India.}
\email{kalyan@hri.res.in}
\address{Azizul Hoque @Harish-Chandra Research 
Institute, HBNI, Chhatnag Road, Jhunsi, Allahabad 211 019, India.}
\email{ahoque.ms@gmail.com}
\address{Mohit Mishra @Harish-Chandra Research 
Institute, HBNI,
Chhatnag Road, Jhunsi,  Allahabad 211 019, India.}
\email{mohitmishra@hri.res.in}

\keywords{Real quadratic field, Class group, Dedekind zeta values}
\subjclass[2010] {Primary: 11R29, 11R42, Secondary: 11R11}
\maketitle

\begin{abstract} Groups of order $4$ are isomorphic to either 
$\mathbb{Z}/4\mathbb{Z}$ or $\mathbb{Z}/2\mathbb{Z} \times
\mathbb{Z}/2\mathbb{Z}$. We give certain sufficient 
conditions permitting to specify the structure of class groups of order $4$ in
the family  of real quadratic fields  $\mathbb{Q}{(\sqrt{n^2+1})}$ as $n$ varies over positive integers. 
Further, we compute the values of Dedekind zeta function attached to these quadratic fields at the point $-1$. As a side result, we show that  the size of the class group of this family could be made as large  as possible by increasing the size of the number of distinct odd  prime factors of  $n$.
\\
{\bf R\'{e}sum\'{e}.} {\`A } isomorphisme pr\`{e}s, il y a deux   groupes possibles d'ordre $4$: $\mathbb{Z}/4\mathbb{Z}$ et  $\mathbb{Z}/2\mathbb{Z}\times\mathbb {Z}/2\mathbb {Z}$. 
Nous donnons des   conditions suffisantes  permettant de  sp\'{e}cifier la structure des groupes de classes d'ordre $4$ dans  la famille des corps quadratiques r\'{e}els $\mathbb{Q}{(\sqrt{n^2+1})}$  lorsque $n$ parcourt l'ensemble des entiers positifs.
De plus, nous calculons la valeur de la fonction z\^{e}ta de   Dedekind     attach\'{e}e \`{a} ces corps au point $ -1 $. Comme r\'{e}sultat secondaire, nous montrons que la cardinalit\'e du groupe de classes des corps de  cette famille peut \^{e}tre aussi grande que possible en augmentant le nombre de  facteurs premiers impairs distincts de $n$.
\end{abstract}

\section{Introduction}
Let $n$ be a positive integer. We are interested in
the single parametric family of real quadratic fields, 
$
k_n = {\mathbb{Q}}(\sqrt{n^2+1}),
$
which are popularly known as Richaud-Degert (R-D) type real 
quadratic fields. These fields have attracted the attention of many 
mathematicians over the years. Here we refer to few works which are 
connected to our investigation. Chowla and Friedlander \cite{CF76} 
conjectured that if $n^2+1$ is a prime with $n>26$ then the class 
number of $k_n$ is greater than $1$. They also conjectured that if 
$n$ is even and $n^2+1$ is square-free then the class number of 
$k_n$ is greater than $1$ except for $n=2,4,6,10,14, 26$. The first 
conjecture was proved by Mollin and Williams \cite{MW88} under the 
generalized Riemann hypothesis, and the second one was settled by 
Bir\'{o} in \cite{BI03}. Chakraborty and Hoque \cite{CH17} proved
that the class number of $k_n$ is always greater than $1$ if $n=mp
$, where $m$ is an odd integer and $p\equiv \pm 1\pmod 8$ is a 
prime. They also proved an analogous result in \cite{CH18} when 
$n=3m$ with $m$ an odd integer. On the other hand, Yokoi 
\cite{YO86} showed that the class number of $k_n$ is $1$ if and 
only if ${\displaystyle\frac{n^2}{4}-t(t+1)}$ (with ${\displaystyle1\leq t\leq \frac{n}{2}-1}$)  
is a prime. Furthermore, some interesting results on the class number one problem of R-D type (resp. non-R-D type) 
fields were obtained by Bir\'{o} and Lapkova (resp. Hoque and Kotyada) (cf.  \cite{L16, L12, HK20}).

It is of considerable interest to explore the structure of class groups and in 
particular when its order is not  prime. As a starting point, one 
can begin with the classification of class groups of order a prime 
power. One can conclude with the help of Brauer--Siegel theorem
that the class number of $k_n$ can be made as large as possible. A simple version of  
Brauer--Siegel theorem is as follows:  
\begin{thma}[{\cite[Corollary]{SLAN}}]\label{BST}
Let $\Delta_{K,d}$, $R_{K,d}$ and $h_{K, d}$ be respectively, the discriminant, regulator and class number of the number field $K$ of degree $d$ over  $\mathbb{Q}$. If $K$ runs over number fields of degree $d$, then
$$ 
\log(h_{K,d}R_{K, d}) \sim \log (\sqrt{\Delta_{K,d}}).
$$
\end{thma}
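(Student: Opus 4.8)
The plan is to route everything through the analytic class number formula, which turns the statement into two size estimates for the residue of the Dedekind zeta function $\zeta_K$. Writing $r_1,r_2$ for the numbers of real and complex places of $K$ and $w_K$ for the number of roots of unity in $K$, this formula reads
\[
\rho_K\ :=\ \operatorname*{Res}_{s=1}\zeta_K(s)\ =\ \frac{2^{r_1}(2\pi)^{r_2}\,h_{K,d}R_{K,d}}{w_K\sqrt{|\Delta_{K,d}|}}.
\]
Since the degree $d = r_1 + 2r_2$ is fixed and $w_K = O_d(1)$ (a degree-$d$ field contains only boundedly many roots of unity), taking logarithms gives
\[
\log\bigl(h_{K,d}R_{K,d}\bigr)\ =\ \log\sqrt{|\Delta_{K,d}|}\ +\ \log\rho_K\ +\ O_d(1).
\]
By the Hermite--Minkowski theorem there are only finitely many number fields of bounded discriminant, so $|\Delta_{K,d}| \to \infty$ along any infinite family; hence it suffices to prove $\log\rho_K = o\bigl(\log|\Delta_{K,d}|\bigr)$, i.e.\ to bound $\rho_K$ both above and below by quantities subexponential in $|\Delta_{K,d}|$.

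First I would dispose of the upper bound, which is elementary. Comparing Euler factors prime by prime gives $\zeta_K(\sigma) \le \zeta(\sigma)^{d}$ for every real $\sigma > 1$; applied at $\sigma = 1 + 1/\log|\Delta_{K,d}|$, and combined with the functional equation and the Phragm\'en--Lindel\"of principle, this produces a convexity bound of size $O_d\bigl((\log|\Delta_{K,d}|)^{d}\bigr)$ for $|\zeta_K(s)|$ on the circle $|s-1| = 1/\log|\Delta_{K,d}|$ (the discriminant enters only through a harmless factor $|\Delta_{K,d}|^{O(1/\log|\Delta_{K,d}|)} = O(1)$). Writing $\rho_K$ as the Cauchy integral of $\zeta_K$ over that circle then gives
\[
\rho_K\ \ll_d\ (\log|\Delta_{K,d}|)^{\,d-1},
\]
so $\log\rho_K \le (d-1)\log\log|\Delta_{K,d}| + O_d(1)$; in particular $\limsup \log(h_{K,d}R_{K,d})/\log\sqrt{|\Delta_{K,d}|} \le 1$.

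The heart of the matter is the lower bound: for every $\epsilon > 0$ one must show $\rho_K \gg_{\epsilon, d} |\Delta_{K,d}|^{-\epsilon}$. Here I would first use standard explicit-formula technology -- a Hadamard factorization of $\zeta_K$ together with the estimate $\ll_d \log|\Delta_{K,d}|$ for the number of its zeros within a bounded distance of $s = 1$ -- to localize the possible smallness of $\rho_K$ in a single exceptional real (Siegel) zero $\beta_0$ of $\zeta_K$: if $\zeta_K$ has no real zero in $(1-\delta, 1)$ then $\rho_K \gg_d \delta/\log|\Delta_{K,d}|$, and a real zero close to $1$ is the only mechanism that can make $\rho_K$ small. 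It then remains to bound $1 - \beta_0$ from below, and this is where Siegel's ineffective argument enters: either no Dedekind zeta function of a degree-$d$ field has a real zero in $(1-\epsilon, 1)$, in which case the estimate just quoted already gives a power-saving lower bound for all $K$; or there is a single, \emph{fixed} field $K_0$ whose zeta function does have such a zero, and then Siegel's comparison argument -- carried out after Brauer's reduction of $\zeta_K/\zeta$ to abelian (Hecke) $L$-functions -- together with the upper bound above yields $1 - \beta_0 \gg_{K_0, \epsilon} |\Delta_{K,d}|^{-\epsilon}$, hence $\rho_K \gg_{K_0, \epsilon} |\Delta_{K,d}|^{-\epsilon}$, for every $K$. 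In either case $\liminf \log(h_{K,d}R_{K,d})/\log\sqrt{|\Delta_{K,d}|} \ge 1 - 2\epsilon$, and letting $\epsilon \to 0$ completes the argument.

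The main obstacle is this last step: controlling -- not eliminating -- a real zero of $\zeta_K(s)$ lying extremely close to $s = 1$. It is precisely here that effectivity is lost, since Siegel's dichotomy produces the constant implicit in $\gg_{K_0,\epsilon}$ only under the possibly vacuous hypothesis that the exceptional field $K_0$ exists, with no way to exhibit it or to estimate $1 - \beta_0(K_0)$.
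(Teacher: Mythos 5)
The paper does not prove this statement at all: it is Theorem A, quoted verbatim from the Corollary in S.~Lang's \emph{Algebraic Number Theory} \cite{SLAN}, so there is no internal proof to compare against. Your sketch is essentially the standard Brauer--Siegel argument given in that reference (analytic class number formula, an upper bound for the residue of $\zeta_K$ at $s=1$, and the ineffective Siegel-zero lower bound obtained via Brauer's reduction to Hecke $L$-functions, with the passage to the Galois closure handling non-normal $K$ of fixed degree), so it matches the cited source's approach and is correct as a sketch.
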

Thus there are only finitely many $k_n$ of a given class number $h$.
However, Theorem \ref{BST} is ineffective in finding 
out the exact values of $n$ such that class number of $k_n$ equals 
$h$. On the other hand, Byeon and Kim obtained certain necessary and sufficient 
conditions for the class number of $k_n$ to be $1$ in \cite{BK1} 
(resp. $2$ in \cite{BK2}). 
The authors obtained criteria for  
class number $3$ for $k_n$  in \cite{CHM18} following Byeon-Kim technique. In 
this paper, we give certain sufficient conditions (depending on the 
factorization of $n$) to specify the structure of the class groups of 
$k_n$ of order $4$. Note that 
$4$ is the smallest class number with the 
possibility of two choices of  class groups up to isomorphism. We denote by $\mathfrak{C}(k_n)$ and $\zeta_{k_n}(s)$ respectively,  the class group of $k_n$ and the Dedekind zeta function attached to $k_n$.  The results of this paper are presented below.

We discuss the results based on the class $n^2+1\equiv 1, 2, 5\pmod 8$.  We first consider the case when  $n^2\equiv 0 
\pmod 8$.  Let $n=2^mn_0$ for some integers $m>1$ and $n_0\equiv  
1\pmod 2$. Then $n_0$ has at most two prime factors by 
Proposition \ref{prop1}. 

\begin{theorem}\label{thm1}
Let $d=n^2+1 \equiv 1 \pmod 8$ be a square-free integer, and let 
$h(d)=4$.
\begin{itemize}
\item[(I)] If $n=2^mp^t$ with $m, t\geq 2$ and $p$ an odd prime, 
then \begin{itemize}
\item[(i)] $\mathfrak{C}(k_n)\cong{\displaystyle \mathbb{Z}/4\mathbb{Z}}$,
\item[(ii)] ${\displaystyle \zeta_{k_n}(-1)=\frac{n^3+14n}{360}+
\frac{n^3+n(4p^4+10p^2)}{180p^2}+\frac{n^3+n(4p^8+10p^4)}{360p^4}}.$
\end{itemize} 
\item[(II)] If $n=2^mp^sq^t$ with $p, q$ distinct odd primes, and 
$m>1, s, t\geq 1$ three integers, then
\begin{itemize}
\item[(i)] $\mathfrak{C}(k_n)\cong \mathbb{Z}/2\mathbb{Z}\times 
\mathbb{Z}/2\mathbb{Z}$,
\item[(ii)] $\zeta_{k_n}(-1)={\displaystyle\frac{n^3+32n}{288}+
\frac{n^3+n(4p^4+10p^2)}{360p^2}+\frac{n^3+n(4q^4+10q^2)}{360q^2}}.$
\end{itemize} 
\end{itemize}
\end{theorem}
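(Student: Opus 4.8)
\medskip

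The plan is to separate the two assertions in each part: $(\mathrm{i})$ concerns the $2$-part of $\mathfrak{C}(k_n)$ and will be settled by genus theory, whereas $(\mathrm{ii})$ is a direct evaluation of a special value and is essentially independent of $(\mathrm{i})$.

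For $(\mathrm{i})$, the starting observation is that $k_n$ carries a unit of norm $-1$, namely $\varepsilon_n = n + \sqrt{n^2+1}$, since $N(\varepsilon_n) = n^2 - (n^2+1) = -1$; consequently the ordinary and narrow class groups of $k_n$ coincide. As $d = n^2 + 1$ is square-free and $\equiv 1 \pmod 4$, it equals the discriminant of $k_n$, and genus theory then gives that the $2$-rank of $\mathfrak{C}(k_n)$ is $\omega(d) - 1$, where $\omega(d)$ is the number of distinct primes dividing $d$. Since $h(d) = 4$, this $2$-rank is $1$ or $2$, so $\omega(d) \in \{2,3\}$; and a group of order $4$ is isomorphic to $\mathbb{Z}/4\mathbb{Z}$ precisely when its $2$-rank is $1$, and to $\mathbb{Z}/2\mathbb{Z}\times\mathbb{Z}/2\mathbb{Z}$ precisely when its $2$-rank is $2$. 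Hence $(\mathrm{i})$ reduces to showing that $\omega(n^2+1) = 2$ in case $(\mathrm{I})$ and $\omega(n^2+1) = 3$ in case $(\mathrm{II})$.

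To determine $\omega(d)$ I would use Proposition \ref{prop1} together with the splitting behaviour of the primes dividing $n$. The relevant facts are: every odd prime $\ell \mid n$ splits in $k_n$, because $n^2 + 1 \equiv 1 \pmod{\ell}$ forces $\left(\tfrac{d}{\ell}\right) = 1$; the prime $2$ splits, because $d \equiv 1 \pmod 8$; and, from the ideal identity $(n)^2 = (\sqrt{d} - 1)(\sqrt{d} + 1)$, whose two factors have greatest common divisor dividing $(2)$, one reads off that for each such $\ell$ the $\ell$-part of $(\sqrt{d} - 1)$ is a full power of a single prime above $\ell$, so that an explicit product of the primes above $2$ and above the $\ell \mid n$ is a principal ideal. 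Combining these principal relations, the orders of the ideal classes $[\mathfrak{p}_\ell]$ and $[\mathfrak{p}_2]$ that they impose, and the hypothesis $h(d) = 4$ with the genus-theoretic structure above, pins down $\omega(d)$ --- equivalently, the isomorphism type of $\mathfrak{C}(k_n)$. I expect this last step --- distinguishing $\omega(d) = 2$ from $\omega(d) = 3$ --- to be the crux of the argument, and the point where the precise shape of $n$ (one versus two odd prime factors, with the exponent conditions $m > 1$, $t \ge 2$) is genuinely used.

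For $(\mathrm{ii})$, I would use $\zeta_{k_n}(s) = \zeta(s)\,L(s, \chi_d)$ with $\chi_d = \left(\tfrac{d}{\cdot}\right)$, so that $\zeta_{k_n}(-1) = -\tfrac{1}{12}\,L(-1, \chi_d) = \tfrac{1}{24}\,B_{2,\chi_d}$ in terms of the generalized Bernoulli number $B_{2,\chi_d}$; equivalently, I would invoke Siegel's formula
\[
\zeta_{k_n}(-1) \;=\; \frac{1}{60}\sum_{\substack{b \in \mathbb{Z},\ b\ \mathrm{odd} \\ b^2 < d}}\sigma_1\!\left(\frac{d - b^2}{4}\right),
\]
$\sigma_1$ denoting the sum-of-divisors function, and evaluate the right-hand side for $d = n^2 + 1$ with $n = 2^m p^t$ (respectively $n = 2^m p^s q^t$). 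Using the congruences $n^2 \equiv -1 \pmod{q_i}$ for the primes $q_i \mid d$ together with quadratic reciprocity, the symbols $\left(\tfrac{a}{d}\right)$ that occur --- equivalently, the divisor structure of the integers $\tfrac{d - b^2}{4}$ --- are determined by the prime-power shape of $n$; grouping the terms accordingly and summing the resulting arithmetic progressions of squares yields the three displayed summands. This computation uses only the factorization of $n$ and that $d \equiv 1 \pmod 8$, not part $(\mathrm{i})$; the only real delicacy is the bookkeeping of which residues contribute.
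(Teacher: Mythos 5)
There is a genuine gap, and it sits exactly where you yourself locate ``the crux.'' Your genus-theory reduction is correct (a norm $-1$ unit exists since $N(n+\sqrt{d})=-1$, so narrow and wide class groups agree and the $2$-rank is $\omega(d)-1$), and it is a genuinely different reformulation from the paper, which never invokes genus theory. But after that reduction you still have to decide whether $\omega(n^2+1)=2$ or $3$ from the shape of $n$, and the tools you propose cannot do this. The primes dividing $n$ are coprime to $d=n^2+1$, so their splitting says nothing directly about $\omega(d)$; and the principal relation you extract from $(n)^2=(\sqrt{d}-1)(\sqrt{d}+1)$ (more precisely from $\bigl(\tfrac{\sqrt{d}-1}{2}\bigr)\bigl(\tfrac{\sqrt{d}+1}{2}\bigr)=(n/2)^2$) reads $[\mathfrak{p}_2]^{2m-2}[\mathfrak{p}]^{2t}=1$ in case (I): every exponent is even, so the relation lies in the subgroup of squares and is satisfied automatically in $\mathbb{Z}/2\mathbb{Z}\times\mathbb{Z}/2\mathbb{Z}$, while in $\mathbb{Z}/4\mathbb{Z}$ it only constrains discrete logarithms mod $2$. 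Hence it cannot distinguish the two group structures, i.e.\ it cannot show that the class of a prime above $p$ has order $4$ in case (I) and order $2$ in case (II). The paper's proof supplies precisely this missing input analytically: it computes $\zeta_{k_n}(-1,\mathfrak{P})$, $\zeta_{k_n}(-1,\mathfrak{A})$, $\zeta_{k_n}(-1,\mathfrak{A}^2)$ (and in case (II) the classes above $p$, $q$, $2$) in closed form via Lang's formula and the generalized Dedekind sums of Lemmas \ref{DS1}--\ref{DS2}, and observes that any coincidence of these values would force $n\in\{2p,2p^2\}$ (resp.\ $n\in\{4,2p,2q,2pq,4p,4q\}$), excluded by the exponent hypotheses; this is what proves $\mathfrak{A}^2\neq\mathfrak{P}$, resp.\ that three pairwise distinct non-principal classes with pairwise distinct partial zeta values exist, which is incompatible with a cyclic group of order $4$ since a class and its inverse have equal partial zeta values.

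Part (ii) has a related problem. In the paper, the closed formula for $\zeta_{k_n}(-1)$ is obtained by summing the four partial zeta values once the class group structure is known, so (ii) is not independent of (i): the two displayed formulas in (I)(ii) and (II)(ii) reflect the two different lists of ideal classes. Your alternative --- evaluating Siegel--Zagier's sum $\frac{1}{60}\sum_{b}\sigma\bigl(\tfrac{d-b^2}{4}\bigr)$ directly --- is only asserted, not carried out: the integers $\tfrac{d-b^2}{4}$ for the roughly $n$ admissible values of $b$ have divisor structure that is not controlled by the factorization of $n$ alone, and no amount of quadratic-reciprocity bookkeeping of residues is shown (or likely) to collapse that sum to the stated rational functions of $n,p,q$. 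So as written, both halves of the proposal rely on an unproved key step, and the paper's machinery of partial zeta values computed via Lang's formula is avoided without being replaced.
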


We now consider  $n^2\equiv 4\pmod 8$. Let $n=2n_0$ for  
 some odd integer $n_0\geq 1$. It follows from Proposition 
\ref{prop1} that $n_0$ has at most three prime factors. 
\begin{theorem}\label{thm2}
Let $d=n^2+1 \equiv 5 \pmod 8$ be a square-free integer, and let
$h(d)=4$.
\begin{itemize}
\item[(I)]
If $n=2p^t$ with $t\geq 3$ an integer, then
\begin{itemize}
\item[(i)] $\mathfrak{C}(k_n)\cong \mathbb{Z}/4\mathbb{Z}$,
\item[(ii)] $\zeta_{k_n}(-1)={\displaystyle\frac{n^3+14n}{360}+
\frac{n^3+n(4p^4+10p^2)}{180p^2}+\frac{n^3+n(4p^8+10p^4)}{360p^4}}.$
\end{itemize} 
\item[(II)] If $n=2p^sq^t$ with $p, q$ distinct odd primes and $s, 
t$ two positive integers such that one of them is greater $2$ or both 
are greater than equal to $2$, then $\mathfrak{C}(k_n)\cong 
\mathbb{Z}/2\mathbb{Z}\times \mathbb{Z}/2\mathbb{Z}$.
\item[(III)] If $n=2p^rq^s\ell^t$ with $p,q,\ell$ distinct odd 
primes and $r, s, t$ positive integers, then
\begin{itemize}
\item[(i)] $\mathfrak{C}(k_n)\cong \mathbb{Z}/2\mathbb{Z}\times 
\mathbb{Z}/2\mathbb{Z}$,
\item[(ii)] $\zeta_{k_n}(-1)={\displaystyle\frac{n^3+14n}{360}+
\frac{n^3+n(4p^4+10p^2)}{360p^2}+\frac{n^3+n(4q^4+10q^2)}{360q^2}+
\frac{n^3+n(4\ell^4+10\ell^2)}{360\ell^2}}.$
\end{itemize}
\end{itemize}
\end{theorem}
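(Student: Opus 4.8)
I would treat the isomorphism type of $\mathfrak{C}(k_n)$ and the value $\zeta_{k_n}(-1)$ separately. For the group, the basic inputs are that $\varepsilon_0=n+\sqrt{n^2+1}$ is the fundamental unit of $k_n$ with $N_{k_n/\mathbb{Q}}(\varepsilon_0)=-1$, so that the narrow and ordinary class groups of $k_n$ coincide, and that genus theory then gives
\[
\dim_{\mathbb{F}_2}\bigl(\mathfrak{C}(k_n)/\mathfrak{C}(k_n)^2\bigr)=\omega(d)-1,
\]
where $\omega(d)$ is the number of distinct primes dividing $d=n^2+1$ (here $2$ is inert, since $d\equiv5\pmod 8$, and the ramified primes are exactly those dividing $d$). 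As $h(d)=4$, the group $\mathfrak{C}(k_n)$ is a $2$-group of order $4$, so $\mathfrak{C}(k_n)\cong\mathbb{Z}/4\mathbb{Z}$ precisely when $\omega(d)=2$ and $\mathfrak{C}(k_n)\cong\mathbb{Z}/2\mathbb{Z}\times\mathbb{Z}/2\mathbb{Z}$ precisely when $\omega(d)=3$. Two extra facts drive the analysis: every odd prime $p\mid n$ splits in $k_n$ (because $d\equiv1\pmod p$), and $\bigl(\tfrac{\varepsilon_0-1}{2}\bigr)$, $\bigl(\tfrac{\varepsilon_0+1}{2}\bigr)$ are coprime principal ideals with product $(n_0)$; the latter forces, after choosing for each $p\mid n_0$ one of the two primes $\mathfrak{p}\mid p$, the ideal $\prod_{p\mid n_0}\mathfrak{p}^{\,v_p(n_0)}$ to be principal.

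For part~(I), with $n=2p^{t}$, this makes $[\mathfrak{p}]^{t}=1$. Conversely, if $\mathfrak{p}=\bigl(\tfrac{a+b\sqrt d}{2}\bigr)$ were principal then $a^{2}-db^{2}=\pm4p$, which is impossible once $t\ge2$ because $a^{2}$ would have to lie strictly between consecutive squares near $n^{2}$ (a brief case analysis in $b$); similarly $\mathfrak{p}^{2}$ is non-principal for $t\ge3$, whereas $[\mathfrak{p}]^{2}=1$ when $t=2$, which is why $t\ge3$ is imposed. Hence $[\mathfrak{p}]$ has order $>2$ dividing $\gcd(t,4)$, forcing $4\mid t$ and $[\mathfrak{p}]$ of order $4$ (the remaining residues of $t$ being vacuous, since then $h(d)=4$ cannot hold), so $\mathfrak{C}(k_n)=\langle[\mathfrak{p}]\rangle\cong\mathbb{Z}/4\mathbb{Z}$. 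Parts~(II) and~(III) I would instead derive from Proposition~\ref{prop1}: in~(III), three distinct odd primes in $n$ already yield $\omega(n_0)-1=2$ independent classes of order $2$, so $\dim_{\mathbb{F}_2}\bigl(\mathfrak{C}(k_n)/\mathfrak{C}(k_n)^2\bigr)\ge2$ and hence $=2$, giving $\mathbb{Z}/2\mathbb{Z}\times\mathbb{Z}/2\mathbb{Z}$ with no restriction on the exponents; in~(II), Proposition~\ref{prop1} combined with the stated exponent conditions (again used to keep the relevant split-prime ideals non-principal) is exactly what forces $\dim_{\mathbb{F}_2}\bigl(\mathfrak{C}(k_n)/\mathfrak{C}(k_n)^2\bigr)=2$, and the same conclusion follows.

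For $\zeta_{k_n}(-1)$ I would use Siegel's formula
\[
\zeta_{k_n}(-1)=\tfrac1{60}\sum_{\substack{b\in\mathbb{Z},\ b\equiv d\ (2)\\ b^{2}<d}}\sigma_1\!\Bigl(\tfrac{d-b^{2}}{4}\Bigr),
\]
$\sigma_1$ being the sum-of-divisors function; for $d=n^{2}+1$ it reduces to $\tfrac1{30}\sum_{j=1}^{n_0}\sigma_1\!\bigl(n_0^{2}-j(j-1)\bigr)$. The shape of the stated answers becomes transparent after decomposing $\zeta_{k_n}(s)=\sum_{\mathfrak{C}}\zeta(s,\mathfrak{C})$ over ideal classes: for these Richaud--Degert fields each partial value equals
\[
g(m):=\frac{n^{3}}{360\,m^{2}}+\frac{n\,m^{2}}{90}+\frac{n}{36}=\frac{n^{3}+n(4m^{4}+10m^{2})}{360\,m^{2}},
\]
with $m$ the norm of a reduced representative of $\mathfrak{C}$, and conjugate classes contribute equally, so that (I) gives $g(1)+2g(p)+g(p^{2})$ and (III) gives $g(1)+g(p)+g(q)+g(\ell)$, as asserted. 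I expect the real work to be concentrated in two places. First, the delicate bookkeeping in parts (I) and (II): matching the exponent thresholds to the non-principality of $\mathfrak{p},\mathfrak{p}^{2}$ and to the orders of the resulting classes, with Proposition~\ref{prop1} supplying the lower bound on $\dim_{\mathbb{F}_2}\bigl(\mathfrak{C}(k_n)/\mathfrak{C}(k_n)^2\bigr)$. Second, the explicit evaluation of the finite Siegel sum, i.e.\ proving that its $\sigma_1$-contributions regroup exactly along the four ideal classes to produce the rational functions in the statement.
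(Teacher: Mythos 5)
Your treatment of parts (II) and (III) has a genuine gap. The genus-theory identity $\dim_{\mathbb{F}_2}\bigl(\mathfrak{C}(k_n)/\mathfrak{C}(k_n)^2\bigr)=\omega(d)-1$ concerns the \emph{ramified} primes, i.e.\ the prime divisors of $d=n^2+1$; the primes $p,q,\ell$ dividing $n$ split, and nothing in the hypotheses controls $\omega(n^2+1)$, so this identity cannot be linked to the factorization of $n$ and does not advance the proof. When you pivot to Proposition \ref{prop1}, two problems remain. In (II) the proposition is simply inapplicable, since there $\mathcal{N}=2<3$. In (III) it yields only that the classes of the split primes above $p,q,\ell$ are pairwise distinct and non-principal --- \emph{not} that they are ``independent classes of order $2$''; the single relation $[\mathfrak{p}]^r[\mathfrak{q}]^s[\mathfrak{l}]^t=1$ coming from $\bigl(\tfrac{\varepsilon_0\pm1}{2}\bigr)$ gives nothing of the sort. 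With $h(d)=4$ one must still exclude $\mathfrak{C}(k_n)\cong\mathbb{Z}/4\mathbb{Z}$. The paper does this with the partial zeta values: in a cyclic group of order $4$ two of the three nontrivial classes are mutually inverse and hence have equal values of $\zeta_{k_n}(-1,\cdot)$, whereas the three split-prime classes have pairwise distinct values (equality would force $n=2pq$, $2p\ell$, $2q\ell$). In (II) the analogous step is: if $[\mathfrak{p}]$ generated $\mathbb{Z}/4\mathbb{Z}$, then $[\mathfrak{q}]\neq[\mathfrak{p}]^{-1}$ because the partial values differ, hence $[\mathfrak{q}]=[\mathfrak{p}]^{2}$, which forces $n=2p^2q$ --- exactly what the exponent hypothesis excludes; that hypothesis is not there merely ``to keep the split-prime ideals non-principal.'' None of this appears in your sketch, so the conclusion $\mathbb{Z}/2\mathbb{Z}\times\mathbb{Z}/2\mathbb{Z}$ is not reached.

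On the zeta values and part (I): your assertion that each partial value equals $g(m)$ with $m$ the norm of a representative is precisely what must be proved; the paper derives it for the specific ideals $\bigl(p,\tfrac{1\pm\sqrt d}{2}\bigr)$, $\bigl(p^2,\tfrac{1\pm\sqrt d}{2}\bigr)$ by exhibiting integral bases and applying Lang's formula (Theorem \ref{thm2.2} with Lemma \ref{2.1}) together with the Dedekind-sum evaluations (Lemma \ref{DS1}). Since you defer this as ``the real work,'' statements (I)(ii) and (III)(ii) are not established, and note that regrouping the Siegel sum along classes already presupposes the class-group structure. For part (I) your route --- the relation $[\mathfrak{p}]^{t}=1$ plus non-principality of $\mathfrak{p}$ and $\mathfrak{p}^2$ via the norm equations --- genuinely differs from the paper, which obtains non-principality by comparing $\zeta_{k_n}(-1,\mathfrak{A})$, $\zeta_{k_n}(-1,\mathfrak{A}^2)$ with $\zeta_{k_n}(-1,\mathfrak{P})$ (giving $n=2p$ or $2p^2$, excluded by $t\ge3$) and needs neither $4\mid t$ nor any vacuousness claim; your version is workable, but the ``brief case analysis in $b$'' requires first normalizing a putative generator by powers of $\varepsilon_0$ to bound $|b|$ --- without that bound the ``strictly between consecutive squares'' argument does not cover all $b$.
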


Finally  the case $n^2\equiv 1 \pmod 8$ and in this case $n$ is an odd integer. Thus Proposition \ref{prop1} implies that $n$ has at most two prime factors. 

\begin{theorem}\label{thm3}
Let $d=n^2+1 \equiv 2 \pmod 4$ be a square-free integer, and let 
$h(d)=4$.
\begin{itemize}
\item[(I)]
If $n$ is an odd positive integer with two distinct odd prime 
factors $p$ and $q$, then
\begin{itemize}
\item[(i)] $\mathfrak{C}(k_n)\cong \mathbb{Z}/2\mathbb{Z}\times 
\mathbb{Z}/2\mathbb{Z}$,
\item[(ii)] $\zeta_{k_n}(-1)={\displaystyle\frac{n^3+5n}{36}+
\frac{4n^3+n(p^4+10p^2)}{180p^2}+\frac{4n^3+n(q^4+10q^2)}{180q^2}}$.
\end{itemize}
\item[(II)] If $n=p^t$ with $t\geq 2$ an integer and $h(d)=4$, then 
$\mathfrak{C}(k_n)\cong \mathbb{Z}/4\mathbb{Z}$.
\end{itemize}
\end{theorem}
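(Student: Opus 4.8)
The plan is to combine two elementary facts about $k_n=\mathbb{Q}(\sqrt d)$, $d=n^{2}+1$: its ring of integers is $\mathcal O=\mathbb{Z}[\sqrt d]$ (since $d\equiv 2\pmod 4$), and $\sqrt d=[\,n;\overline{2n}\,]$ is a purely periodic continued fraction of period one. Via the classical continued-fraction criterion for representability by $x^{2}-dy^{2}$, the second fact gives the key Diophantine lemma: if $0<|c|<\sqrt d$ then $x^{2}-dy^{2}=c$ has no primitive solution unless $|c|=1$; equivalently, a \emph{primitive} integral ideal $\mathfrak a\subsetneq\mathcal O$ (one divisible by no rational prime) with $N(\mathfrak a)<\sqrt d$ is never principal. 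I also use that an odd prime $p\mid n$ splits, $(p)=\mathfrak p\bar{\mathfrak p}$ with $\mathfrak p=(p,\sqrt d-1)$ (because $d\equiv 1\pmod p$), and that since $(\sqrt d-1)(\sqrt d+1)=n^{2}$ while the gcd of the two ideals on the left divides both $(2)$ and $(n^{2})$ and hence equals $\mathcal O$ (as $n$ is odd), one obtains $(\sqrt d-1)=\prod_{p\mid n}\mathfrak p^{2v_p(n)}$. By Proposition~\ref{prop1} only two cases arise: $n=p^{t}$ with $t\ge 2$ (part II), and $n$ with two distinct odd prime factors (part I).

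For part (II), $n=p^{t}$ with $t\ge 2$: the ideal $\mathfrak p^{2}$ is primitive with $N(\mathfrak p^{2})=p^{2}\le p^{t}=n<\sqrt d$, so by the lemma $\mathfrak p^{2}$ is non-principal. Hence the order of $[\mathfrak p]$ in $\mathfrak C(k_n)$ is a divisor of $h(d)=4$ different from $1$ and $2$, so it equals $4$, and therefore $\mathfrak C(k_n)=\langle[\mathfrak p]\rangle\cong\mathbb{Z}/4\mathbb{Z}$.

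For part (I), let $p,q$ be the two odd prime factors, with $a=v_p(n)$, $b=v_q(n)$, and set $\mathfrak c=\mathfrak p^{a}\mathfrak q^{b}$, so that $\mathfrak c^{2}=(\sqrt d-1)$ is principal. The four ideals $\mathfrak p^{a}$, $\mathfrak q^{b}$, $\mathfrak c$, $\mathfrak p^{a}\bar{\mathfrak q}^{\,b}$ are all primitive with norms $p^{a}$, $q^{b}$, $n$, $n$, each less than $\sqrt d$ (indeed $p^{a},q^{b}\le n/3$ and $n<\sqrt d$); by the lemma none of them is principal. Thus $[\mathfrak p]^{a}\ne 1$, $[\mathfrak q]^{b}\ne 1$, $[\mathfrak p]^{a}\ne[\mathfrak q]^{b}$, and $[\mathfrak c]=[\mathfrak p]^{a}[\mathfrak q]^{b}$ has order exactly $2$. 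If $\mathfrak C(k_n)$ were cyclic of order $4$, then $[\mathfrak c]$ would be its unique element of order $2$, and the relation $[\mathfrak c]=[\mathfrak p]^{a}[\mathfrak q]^{b}$ would express the order-$2$ element of $\mathbb{Z}/4\mathbb{Z}$ as a product of two distinct nontrivial elements — which is impossible, since in $\mathbb{Z}/4\mathbb{Z}$ the order-$2$ element is a product of two nontrivial elements only when they coincide. Hence $\mathfrak C(k_n)\cong\mathbb{Z}/2\mathbb{Z}\times\mathbb{Z}/2\mathbb{Z}$.

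For the zeta values I would use $\zeta_{k_n}(s)=\zeta(s)L(s,\chi)$ with $\chi$ the Kronecker character of conductor $4(n^{2}+1)$, so that $\zeta_{k_n}(-1)=-\tfrac1{12}L(-1,\chi)=\tfrac1{24}B_{2,\chi}$, and then expand $B_{2,\chi}=\tfrac1{f}\sum_{a=1}^{f}\chi(a)a^{2}-\sum_{a=1}^{f}\chi(a)a$ with $f=4(n^{2}+1)$ (equivalently, one may start from Siegel's formula $\zeta_{k_n}(-1)=\tfrac1{60}\sum_{|u|\le n}\sigma_1(n^{2}+1-u^{2})$). Evaluating this finite sum by splitting the range of summation and collecting the contributions indexed by the prime factors $p,q$ of $n$ yields, after simplification, the displayed rational functions of $n,p,q$. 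I expect the structural assertions to be short once the continued-fraction lemma is in place; the real effort lies in this last step, namely organising the character (or divisor) sum so that its whole dependence on $p$ and $q$ collapses to the stated closed form.
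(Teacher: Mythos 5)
Your treatment of the structural claims, (I)(i) and (II), is correct and genuinely different from the paper's. The paper distinguishes ideal classes by computing partial zeta values at $-1$ via Lang's formula (with the unit $\varepsilon=n+\sqrt d$, explicit integral bases, and Apostol's Dedekind sums) and showing that equality of two such values forces impossible factorizations of $n$; you instead exploit the period-one continued fraction of $\sqrt{n^2+1}$ to get the norm-form lemma that a nontrivial primitive ideal of norm below $\sqrt d$ is never principal, together with the factorization $(\sqrt d-1)=\prod_{p\mid n}\mathfrak p^{2v_p(n)}$. Applied to $\mathfrak p^{2}$ in case (II) and to $\mathfrak p^{a},\mathfrak q^{b},\mathfrak p^{a}\mathfrak q^{b},\mathfrak p^{a}\bar{\mathfrak q}^{\,b}$ in case (I), this gives the order computations cleanly, and your final exclusion of $\mathbb{Z}/4\mathbb{Z}$ (the order-$2$ element cannot be a sum of two distinct nontrivial elements) is sound. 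This route is more elementary, avoids all zeta machinery for the group structure, and handles $t\ge 2$ in (II) directly.

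The gap is (I)(ii): it is not proved, and the route you sketch will not work as described. The displayed expression is precisely the sum of the four partial zeta values $\zeta_{k_n}(-1,\mathfrak P)=\frac{4n^3+11n}{180}$, $\zeta_{k_n}(-1,[(2,\sqrt d)])=\frac{n^3+14n}{180}$, $\zeta_{k_n}(-1,[(p,1+\sqrt d)])=\frac{4n^3+n(p^4+10p^2)}{180p^2}$ and its $q$-analogue, and it equals $\zeta_{k_n}(-1)$ only because, under $h(d)=4$ and part (i), these four classes exhaust $\mathfrak C(k_n)$; for a general odd $n=p^aq^b$ with $d$ square-free the total zeta value includes the partial zeta values of the remaining $h(d)-4$ classes, so (ii) is not an identity in $(n,p,q)$. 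Consequently no rearrangement of $B_{2,\chi}$ or of Siegel's sum $\frac{1}{60}\sum_{|u|\le n}\sigma(n^2+1-u^2)$ that never invokes $h(d)=4$ can ``collapse'' to the stated closed form, and your sketch gives no mechanism by which that hypothesis enters; the collapse you defer to ``the real effort'' is exactly the missing content. The workable completion is the paper's: compute the four partial zeta values unconditionally (Lang's formula with $\varepsilon=n+\sqrt d$, integral bases such as $\{2,\sqrt d\}$, $\{p,1-\sqrt d\}$, $\{q,1-\sqrt d\}$, and the Dedekind-sum evaluations of Lemmas \ref{DS1}--\ref{DS2}), then sum them over the class group determined in (i).
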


As expected, similar results for 
$d$ of the form $n^2+4$ are also obtained.  
We 
use partial zeta values attached to $k_n$ and generalized Dedekind 
sums which are discussed below. Technically speaking, the method 
adopted here  should work for class groups of order $p^2$ for any 
prime number $p$, but it will be far more challenging to
compute the  zeta values, Dedekind sums, fundamental 
unit and appropriate ideal selection in such a situation.       

\section{Computation of partial Dedekind zeta values}
Let $k$ be a real quadratic field, and $\zeta_{k}(s)$ be the 
Dedekind zeta function attached to $k$. By specializing Siegel's 
formula \cite{SI69} for $\zeta_k(1-2n)$ for general $k$, Zagier 
\cite{ZA76} described this formula by direct analytic methods when 
$k$ is a real quadratic field. This formula takes the following 
shape for $n=1$.
\begin{thma} [{\cite[p. 69]{ZA76}}]\label{thm2.1}
Let $k$ be a real quadratic field with discriminant $D$. Then 
$$
\zeta_k(-1)=\frac{1}{60}\sum_{\substack{ |t|<\sqrt{D}\\ t^2\equiv 
D\pmod 4}}\sigma\left(\frac{D-t^2}{4}\right),
$$
where $\sigma(n)$ denotes the sum of divisors of $n$.\
\end{thma}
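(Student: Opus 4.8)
The plan is to reduce the formula to the classical evaluation of Dirichlet $L$-functions at negative integers, and then to a purely elementary identity about divisor sums, the latter being the real content. Since $k$ has discriminant $D>0$, quadratic reciprocity (equivalently class field theory) gives the factorization $\zeta_k(s)=\zeta(s)L(s,\chi_D)$, where $\chi_D=\bigl(\tfrac{D}{\cdot}\bigr)$ is the Kronecker symbol, a primitive character modulo $D$ which is \emph{even} since $\chi_D(-1)=1$ for $D>0$. Evaluating at $s=-1$ with $\zeta(-1)=-\tfrac1{12}$ and the Hurwitz formula $L(1-m,\chi)=-B_{m,\chi}/m$ yields $\zeta_k(-1)=\tfrac1{24}B_{2,\chi_D}$, where $B_{2,\chi_D}=D\sum_{a=1}^{D}\chi_D(a)B_2(a/D)$ and $B_2(x)=x^2-x+\tfrac16$. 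The contributions of $\sum_a\chi_D(a)$ and of $\sum_a\chi_D(a)\,a$ both vanish (the first because $\chi_D$ is nontrivial; the second by the substitution $a\mapsto D-a$ together with $\chi_D(-1)=1$), so
\[
\zeta_k(-1)=\frac{1}{24D}\sum_{a=1}^{D}\chi_D(a)\,a^{2}.
\]
As a check, for $D=5$ the right side is $\tfrac{1-4-9+16}{120}=\tfrac1{30}$, which agrees with $\tfrac1{60}\bigl(\sigma(1)+\sigma(1)\bigr)$, the two terms arising from $t=\pm1$.

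It therefore remains to prove the elementary identity $5\sum_{a=1}^{D}\chi_D(a)\,a^{2}=2D\sum_{t}\sigma\!\left(\tfrac{D-t^{2}}{4}\right)$, with $t$ running over $|t|<\sqrt D$, $t^{2}\equiv D\pmod 4$. The approach is to read both sides in terms of integral binary quadratic forms of discriminant $D$. Writing $\sigma(N)=\sum_{de=N}d$, the right side becomes a $d$-weighted count of triples $(t,d,e)$ with $d,e\geq1$ and $t^{2}+4de=D$, i.e.\ of forms $[d,t,-e]$ of discriminant $D$; grouping these into $\mathrm{SL}_2(\mathbb Z)$-orbits (finitely many, each a reduction cycle, by Gauss's theory) organizes the count cycle-by-cycle. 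On the left, $\sum_a\chi_D(a)a^{2}=D\,B_{2,\chi_D}$ is, via the decomposition of $\zeta_k$ into partial zeta functions indexed by the (narrow) ideal classes and the classical bijection between those classes and classes of primitive indefinite forms of discriminant $D$, a sum over the same cycles: expanding the quadratic irrationality attached to a form as a purely periodic ``minus'' continued fraction $[[\overline{b_1,\dots,b_\ell}]]$ with all $b_i\geq2$ and applying the Shintani--Zagier cone decomposition, the value of the partial zeta function at $-1$ collapses to a finite sum of $B_2$-values running over the period $b_1,\dots,b_\ell$. The theorem is then the assertion that these two cycle expansions agree term for term, with the correct proportionality constant.

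The \textbf{main obstacle} is precisely this matching: showing that for each cycle the sum of $B_2$-values produced by the partial zeta function equals $\sum d$, the divisor contribution of that cycle, with the normalization that produces the $\tfrac1{60}$ in the statement. This is Siegel's formula specialized to $n=1$, and the delicate part is the bookkeeping — keeping track of the two signs of $t$, of the term $t=0$ (which occurs exactly when $4\mid D$), and of the boundary/telescoping terms that arise when summing the $B_2$-values around the period. An alternative that trades this combinatorics for modular-forms input is to recognize $\sum_t\sigma\!\left(\tfrac{D-t^2}{4}\right)$ as a Fourier coefficient of a weight $\tfrac52$ Cohen--Eisenstein series, whose coefficients are the numbers $L(-1,\chi_\bullet)$, and to invoke Cohen's identity expressing these coefficients through such divisor sums; for a self-contained argument I would nonetheless follow the continued-fraction route, expecting the verification of the numerical constants to be the step demanding the most care.
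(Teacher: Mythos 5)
The paper offers no proof of this statement at all: it is quoted directly from Zagier \cite{ZA76} as a specialization of Siegel's formula, so there is no internal argument to compare yours with; your proposal has to stand on its own, and as written it does not. The first half is correct and routine: $\zeta_k(s)=\zeta(s)L(s,\chi_D)$, $\zeta(-1)=-\tfrac1{12}$, $L(-1,\chi_D)=-B_{2,\chi_D}/2$, and the vanishing of the $\sum_a\chi_D(a)$ and $\sum_a\chi_D(a)\,a$ contributions for the even primitive character $\chi_D$ indeed give $\zeta_k(-1)=\frac{1}{24D}\sum_{a=1}^{D}\chi_D(a)a^2$ (your check at $D=5$ is right). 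But everything after that is a plan rather than a proof: the identity $5\sum_{a=1}^{D}\chi_D(a)a^{2}=2D\sum_{t}\sigma\bigl(\tfrac{D-t^{2}}{4}\bigr)$ is not a side lemma — it \emph{is} the Siegel--Zagier theorem, and you explicitly defer its verification (the term-by-term matching of the $B_2$-sums over reduction cycles with the $d$-weighted count of forms $[d,t,-e]$, the sign and boundary bookkeeping, and the emergence of the constant $\tfrac1{60}$) as the ``main obstacle.''

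Concretely, neither proposed route is executed: the Shintani/minus-continued-fraction collapse of the partial zeta values to $B_2$-values is asserted, not derived, and the bijection-plus-weighting argument that would convert the cycle sums into $\sum_{de}d$ over solutions of $t^2+4de=D$ (including the treatment of $t=0$ when $4\mid D$, of both signs of $t$, and of the narrow versus wide class group) is exactly the delicate combinatorial content you acknowledge skipping. The alternative via Cohen's weight $\tfrac52$ Eisenstein series is likewise only invoked; Cohen's coefficient identity is essentially equivalent to the statement being proved, so citing it is circular as a ``proof.'' The approaches you name are legitimate and would succeed if carried through, so this is a gap of incompleteness rather than a wrong strategy: you have correctly reduced the theorem to the known nontrivial identity, but the heart of the argument is missing.
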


Lang \cite{LAN} gave another method to calculate special values of 
$\zeta_k$. We briefly recall Lang's formula. Recall that an 
integral basis for an integral ideal $\mathcal{I}$ of a number 
field $k$ is a set $\{\alpha_1, \alpha_2,\cdots,\alpha_n \}\subset 
\mathcal{I}$, where $n=[k:\mathbb{Q}]$, such that $\mathcal{I}=
\mathbb{Z}\alpha_1 \oplus \mathbb{Z}\alpha_2 \oplus \cdots \oplus 
\mathbb{Z}\alpha_n$. For a given ideal class $\mathfrak{A}$ of $k$, 
let $\mathfrak{a}$ be an integral ideal in $\mathfrak{A}^{-1}$ with 
an integral basis $\{r_{1},r_{2}\}$, i.e. 
$\mathfrak{a}= \mathbb{Z}r_1 \oplus \mathbb{Z}r_2$, where 
$r_1,r_2 \in \mathfrak{a}$. Set $$\delta(\mathfrak{a}):= 
r_1r_2'-r_1'r_2,$$ 
where $r_1'$ and  $r_2'$ are the conjugates of $r_1$ and $r_2$ 
respectively.

Let $\varepsilon$ be the fundamental unit of $k$. Then $           
\{\varepsilon r_1, \varepsilon r_2\}$ is also an integral basis of 
$\mathfrak{a}$, and thus we can find a matrix 
$M=
\begin{bmatrix}
a&b\\
c&d
\end{bmatrix}
$
with integer entries satisfying the following: 
$$
\varepsilon\begin{bmatrix}
r_1\\r_2
\end{bmatrix}
=M\begin{bmatrix}
r_1\\
r_2
\end{bmatrix}.
$$
The following result of Lang helps one to compute partial zeta 
value for $\mathfrak{A}$ at $-1$.

\begin{thma}[{\cite[p. 159]{LAN}}]\label{thm2.2}
By keeping the above notations, we have 
\begin{align*}
\zeta_k(-1, \mathfrak{A})&={\displaystyle\frac{\textsl{sgn }
\delta(\mathfrak{a})~r_2r_2'}{360N(\mathfrak{a})c^3}}\big\{(a
+d)^3-6(a+d)N(\varepsilon)-240c^3(\textsl{sgn } c)\\
&\times S^3(a,c)+180ac^3(\textsl{sgn } 
c)S^2(a,c)-240c^3(\textsl{sgn } c)S^3(d,c)\\
& +180dc^3(\textsl{sgn } c)S^2(d,c) \big\},
\end{align*}
where $N(\mathfrak{a})$ represents the norm of $\mathfrak{a}$ and 
$S^i(-,-)$ denotes the generalized Dedekind sum as defined in 
\cite{AP50}.
\end{thma}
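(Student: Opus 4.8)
The plan is to realise the partial zeta function $\zeta_k(s,\mathfrak{A})$ as a finite sum of two-variable Dirichlet series attached to the cone of totally positive elements of $\mathfrak{a}$, and then to extract the value at $s=-1$ from the classical special values of such series; the generalized Dedekind sums will appear as the periodic part of the computation, governed by the matrix $M$. The argument is in the spirit of the analytic method of Siegel \cite{SI69}.

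First I would pass from the sum over ideals to a sum over a fundamental domain for the unit action. Since $\zeta_k(s,\mathfrak{A})=\sum_{\mathfrak{b}\in\mathfrak{A}}N(\mathfrak{b})^{-s}$ and, for each $\mathfrak{b}\in\mathfrak{A}$, the ideal $\mathfrak{a}\mathfrak{b}$ is principal, say $\mathfrak{a}\mathfrak{b}=(\xi)$ with $\xi\in\mathfrak{a}$ determined up to multiplication by totally positive units, one gets $N(\mathfrak{a})^{s}\zeta_k(s,\mathfrak{A})=\sum_{\xi}N(\xi)^{-s}$, the sum being over totally positive $\xi\in\mathfrak{a}$ taken modulo the totally positive units. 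Writing $\xi=xr_1+yr_2$, the admissible $(x,y)\in\mathbb{Z}^2$ are the lattice points of a plane sector; the factor $\mathrm{sgn}\,\delta(\mathfrak{a})$ in the statement merely records the orientation of $\{r_1,r_2\}$ (after passing, if necessary, to a basis with $\delta(\mathfrak{a})>0$ this sign is $+1$). On this sector the relevant group of units, generated by $\varepsilon$ or by $\varepsilon^{2}$ according as $N(\varepsilon)=1$ or $-1$, acts through the matrix $M$, and a fundamental domain is the region swept out between a ray and its $M$-image; this is the ultimate source of the $N(\varepsilon)$-dependence of the formula.

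Next I would evaluate the lattice sum over this fundamental domain. Decomposing it into $c$ simplicial sub-cones indexed by $j=0,1,\dots,c-1$ (the index $c$ being the lower-left entry of $M$), each piece contributes a double series of the form $\sum_{m,n\ge 0}N\!\bigl((m+u)\beta_1+(n+v)\beta_2\bigr)^{-s}$, whose value at $s=-1$ is an explicit polynomial in the shift parameters $u,v$ with coefficients assembled from the Bernoulli polynomials $B_2$ and $B_3$; this is the only genuine analytic input, and it is classical. Summing over $j$, the resulting periodic sums in $j$ built from $B_2$ and $B_3$ evaluated at $aj/c$ (and at $dj/c$) are, by their very definition, the generalized Dedekind sums $S^{2}(a,c),S^{3}(a,c)$ (and $S^{2}(d,c),S^{3}(d,c)$) of \cite{AP50}; keeping track of the constants then produces exactly the coefficients $-240c^{3}$, $180ac^{3}$, $-240c^{3}$, $180dc^{3}$ together with the overall normalising prefactor $\dfrac{r_2r_2'}{360\,N(\mathfrak{a})\,c^{3}}$, while the vertex of the cone at infinity contributes the purely polynomial term $(a+d)^{3}-6(a+d)N(\varepsilon)$, a Bernoulli-type polynomial in the trace $a+d$ and the determinant $N(\varepsilon)=\det M$.

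The main obstacle is the combinatorics and the signs, not the analysis. One must fix the fundamental domain with care — deciding which of the boundary rays to include and with what multiplicity — so that the infinite unit orbit genuinely collapses to an \emph{exact} finite sum, and one must track every orientation consistently, which is where the factors $\mathrm{sgn}\,\delta(\mathfrak{a})$ and $\mathrm{sgn}\,c$ enter. Finally one should verify that the answer is independent of the choice of integral ideal $\mathfrak{a}\in\mathfrak{A}^{-1}$ and of its integral basis; this invariance amounts, in effect, to the transformation and reciprocity laws satisfied by the generalized Dedekind sums $S^{i}$.
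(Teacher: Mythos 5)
There is nothing to compare against inside the paper: Theorem~\ref{thm2.2} is not proved there at all, it is quoted verbatim from H.~Lang \cite[p.~159]{LAN}. Judged on its own terms, your outline follows the Siegel--Shintani--Zagier cone-decomposition route (sum over totally positive $\xi\in\mathfrak{a}$ modulo units, fundamental cone for the action of the unit through $M$, simplicial subdivision, Bernoulli-polynomial values of the shifted double series), which is a legitimate strategy and closer in spirit to \cite{ZA76} and \cite{SI69} than to Lang's actual derivation, which goes through Hecke's integral representation of the partial zeta function along the geodesic attached to $\mathfrak{a}$ and Apostol's transformation formula for Lambert series \cite{AP50}, where the sums $S^{2},S^{3}$ enter directly through the automorph $M$.

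However, as written the proposal is a plan rather than a proof: every step that carries the actual content is asserted instead of carried out. Concretely: (i) the claim that the fundamental domain splits into $c$ simplicial pieces whose shift parameters are $aj/c$ and $dj/c$ is exactly where the arguments $a$ and $d$ of the Dedekind sums come from, and it requires a real argument tied to how $M$ acts on the basis $\{r_1,r_2\}$ (and to $bc\neq 0$ from Lemma~\ref{2.1}); (ii) the evaluation at $s=-1$ and the bookkeeping that is supposed to yield precisely the coefficients $-240c^{3}(\mathrm{sgn}\,c)$, $180ac^{3}(\mathrm{sgn}\,c)$, the polynomial term $(a+d)^{3}-6(a+d)N(\varepsilon)$ and the prefactor $\mathrm{sgn}\,\delta(\mathfrak{a})\,r_2r_2'/\bigl(360N(\mathfrak{a})c^{3}\bigr)$ is never performed, and these constants are the whole point of the statement; (iii) your treatment of the unit action is internally inconsistent with the formula: you propose to quotient by $\varepsilon$ or $\varepsilon^{2}$ according as $N(\varepsilon)=1$ or $-1$, but then the matrix governing your cone decomposition is $M$ or $M^{2}$, whereas the displayed formula involves the entries $a,c,d$ of $M$ itself together with $N(\varepsilon)=\det M=-1$; the reduction from the $M^{2}$-data back to $M$ (or a direct argument keeping track of the sign classes of $\xi$ when no totally positive fundamental unit exists) is missing and is a genuine gap, not a formality. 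So the route is viable in principle, but to establish the theorem you would either have to execute the Bernoulli/Dedekind-sum computation explicitly with all orientations tracked, or simply defer to the published derivations of Lang or Zagier, which is what the paper does.
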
 

We need to determine the values of $a,b,c,d$ and generalized 
Dedekind sums in order to apply Theorem \ref{thm2.2}. The following 
result (see, \cite[p. 143, Eq. 2.15]{LAN}) is helpful in determining the 
values of $a, b, c$ and $d$.

\begin{lemma}\label{2.1} The matrix $M$ is given by 
$$
\begin{bmatrix}
Tr\left(\frac{r_1 r_2'\varepsilon}{\delta(\mathfrak{a})}\right)& Tr
\left(\frac{r_1 r_1'\varepsilon'}{\delta(\mathfrak{a})}\right) 
\vspace*{2mm} \\ 
 Tr\left(\frac{r_2 r_2'\varepsilon}{\delta(\mathfrak{a})}\right) &
 Tr\left(\frac{r_1 r_2'\varepsilon'}{\delta(\mathfrak{a})}\right)
\end{bmatrix}
$$
Moreover, $\det(M)=N(\varepsilon)$ and $bc\ne 0$.
\end{lemma}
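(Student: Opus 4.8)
The plan is to verify the displayed matrix entries directly from the defining relation $\varepsilon\begin{bmatrix}r_1\\r_2\end{bmatrix}=M\begin{bmatrix}r_1\\r_2\end{bmatrix}$, and then to read off the determinant and the nonvanishing of $bc$ as consequences. First I would note that $\{r_1,r_2\}$ is a $\mathbb{Z}$-basis of $\mathfrak{a}$, so every element of $\mathfrak{a}$ — in particular $\varepsilon r_1$ and $\varepsilon r_2$ — has a unique expression as an integer combination of $r_1,r_2$; this is exactly the assertion that $M$ exists with integer entries, and $M$ is unique. To extract the entries I would apply the conjugation automorphism of $k/\mathbb{Q}$ to the two scalar equations $\varepsilon r_1=a r_1+b r_2$ and $\varepsilon r_2=c r_1+d r_2$, obtaining $\varepsilon' r_1'=a r_1'+b r_2'$ and $\varepsilon' r_2'=c r_1'+d r_2'$. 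Now I have a $2\times 2$ linear system over $k$ relating $(r_1,r_1')$ and $(r_2,r_2')$ whose "Cramer" solution isolates each of $a,b,c,d$ after dividing by the determinant $\delta(\mathfrak{a})=r_1r_2'-r_1'r_2$.

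Concretely, from $\varepsilon r_1=a r_1+b r_2$ and its conjugate-twisted companion I would solve for $a$: multiplying $\varepsilon r_1=ar_1+br_2$ by $r_2'$, multiplying $\varepsilon' r_1'=ar_1'+br_2'$... more cleanly, pair the equation $\varepsilon r_1=ar_1+br_2$ with $\varepsilon' r_1'=ar_1'+br_2'$; eliminating $b$ by taking $r_2'$ times the first minus $r_2$ times... I would instead use the companion in the correct conjugate slot. The cleanest route is: from $\varepsilon r_1 = a r_1 + b r_2$ I form $r_2' \cdot (\varepsilon r_1)$ and from $\varepsilon' r_1' = a r_1' + b r_2'$ (conjugate of the same equation) I am not yet eliminating $b$, so instead I combine the first equation of the system for $\varepsilon$ with the conjugate of the \emph{second}. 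Let me organize it as: $a$ is obtained by solving the pair $\varepsilon r_1=ar_1+br_2$, $\varepsilon' r_2'= c' \ldots$ — this is getting tangled, so in the write-up I would simply set up the single vector identity, multiply on the right by the adjugate of $\begin{bmatrix}r_1&r_2\\ r_1'&r_2'\end{bmatrix}$, and observe that the entries of $M$ come out as $\frac{1}{\delta(\mathfrak{a})}$ times traces: e.g. $a=\mathrm{Tr}\!\left(\frac{r_1 r_2'\varepsilon}{\delta(\mathfrak{a})}\right)$ because $a\cdot\delta(\mathfrak{a})=r_2'\varepsilon r_1 - r_2\varepsilon' r_1' = r_1 r_2'\varepsilon - (r_1 r_2'\varepsilon)'$, and similarly for $b,c,d$. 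This is a short computation once the bookkeeping of which conjugate multiplies which equation is fixed.

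For the determinant: taking norms (equivalently, determinants of the multiplication-by-$\varepsilon$ action) in $\varepsilon\begin{bmatrix}r_1\\r_2\end{bmatrix}=M\begin{bmatrix}r_1\\r_2\end{bmatrix}$, one gets $\det(M)=N_{k/\mathbb{Q}}(\varepsilon)$. Precisely, form the matrix identity $\begin{bmatrix}\varepsilon r_1 & \varepsilon' r_1'\\ \varepsilon r_2 & \varepsilon' r_2'\end{bmatrix}=M\begin{bmatrix}r_1 & r_1'\\ r_2 & r_2'\end{bmatrix}$ (the first column is the original relation, the second is its conjugate), and take determinants: the left side is $\varepsilon\varepsilon'(r_1r_2'-r_1'r_2)=N(\varepsilon)\,\delta(\mathfrak{a})$, the right side is $\det(M)\,\delta(\mathfrak{a})$, and $\delta(\mathfrak{a})\neq 0$ since $\{r_1,r_2\}$ is a basis of a rank-two lattice whose discriminant is nonzero. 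Hence $\det(M)=N(\varepsilon)$. Finally, $bc\neq 0$: if $c=0$ then the second row gives $\varepsilon r_2=d r_2$, forcing $\varepsilon=d\in\mathbb{Z}$, impossible for a fundamental unit of a real quadratic field; if $b=0$ then $\varepsilon r_1=a r_1$, again forcing $\varepsilon\in\mathbb{Z}$. So both are nonzero.

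The routine part is the conjugation-and-Cramer bookkeeping that produces the four trace formulas; the only place to be careful is getting the conjugates paired with the right equations so that each entry genuinely appears as $\mathrm{Tr}(\,\cdot\,)$ rather than as a difference with a stray sign. I do not anticipate a genuine obstacle here — this is essentially Lang's Eq. (2.15) and the argument is linear algebra over $k$ together with the elementary fact that a fundamental unit is not rational.
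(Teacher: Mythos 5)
Your argument is correct, and it is worth noting that the paper itself gives no proof of this lemma: it simply quotes Lang's Eq.\ (2.15) from \cite{LAN}, so your write-up supplies the standard direct verification that the citation stands in for. The Cramer/adjugate computation you settle on does work: from $\varepsilon r_1=ar_1+br_2$ and its conjugate one gets $a\,\delta(\mathfrak{a})=r_1r_2'\varepsilon-(r_1r_2'\varepsilon)'$, and the one small point you use implicitly and should state is that $\delta(\mathfrak{a})'=-\delta(\mathfrak{a})$, which is exactly what turns each difference $y-y'$ divided by $\delta(\mathfrak{a})$ into $\mathrm{Tr}\bigl(y/\delta(\mathfrak{a})\bigr)$; the same bookkeeping gives $b=\mathrm{Tr}\bigl(r_1r_1'\varepsilon'/\delta(\mathfrak{a})\bigr)$, $c=\mathrm{Tr}\bigl(r_2r_2'\varepsilon/\delta(\mathfrak{a})\bigr)$, $d=\mathrm{Tr}\bigl(r_1r_2'\varepsilon'/\delta(\mathfrak{a})\bigr)$, with $r_1r_1'\in\mathbb{Q}$ making the $b$-entry come out in the stated form. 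Your determinant argument (conjugating the relation to get the $2\times2$ matrix identity, using $\delta(\mathfrak{a})\neq 0$) and your proof that $b\neq0$ and $c\neq0$ (either vanishing would force $\varepsilon\in\mathbb{Z}$, impossible for the fundamental unit, which is irrational) are both sound. The middle paragraph of your draft is visibly tangled before you fix the pairing of conjugates; in a final version you should delete the false starts and present only the adjugate computation, making the sign convention $\delta(\mathfrak{a})'=-\delta(\mathfrak{a})$ explicit, since that is the only place a stray sign could corrupt the trace formulas.
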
 
The following expressions (see, \cite[pp. 155--157, Eq. 4.3--4.19]
{LAN}) involving special values of  generalized Dedekind sums are 
also needed to compute partial zeta values for ideal classes of a 
real quadratic field. 
\begin{lemma}\label{DS1} For any positive integer $m$, we have
\begin{itemize}
\item[(i)] $S^3(\pm 1, m)={\displaystyle\pm\frac{-m^4+5m^2-4}{120m^3}}.$
\vspace*{2mm}
\item[(ii)] $S^2(\pm 1, m)={\displaystyle\frac{m^4+10m^2-6}{180m^3}}.$
\end{itemize}
\end{lemma}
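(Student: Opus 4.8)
The idea is to unwind the definition of the generalized Dedekind sums and to specialize the first argument to $\pm 1$. Writing $\bar B_r$ for the $r$-th periodic Bernoulli function, both $S^3(a,m)$ and $S^2(a,m)$ are finite sums, over a complete residue system $\mu \bmod m$, of products of periodic Bernoulli functions evaluated at $\mu/m$ and $a\mu/m$; in $S^3$ the factor carrying the argument $a$ has odd index, while in $S^2$ it has even index. Since $\bar B_r(-x)=(-1)^r\bar B_r(x)$, it follows at once that $S^2(-1,m)=S^2(1,m)$ whereas $S^3(-1,m)=-S^3(1,m)$; this accounts for the sign $\pm$ in (i) and its absence in (ii), and reduces the proof to the case $a=1$.

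For $a=1$ the plan is as follows. When $\mu$ runs over $0,1,\dots,m-1$ we have $0\le\mu/m<1$, so the periodic Bernoulli functions coincide with the ordinary Bernoulli polynomials $B_1(x)=x-\tfrac12$, $B_2(x)=x^2-x+\tfrac16$, $B_3(x)=x^3-\tfrac32 x^2+\tfrac12 x$. Hence $S^2(1,m)$, respectively $S^3(1,m)$, becomes $\sum_{\mu=0}^{m-1}$ of an explicit polynomial of degree $4$ in $\mu/m$: the expansion of $B_2(\mu/m)^2$ in the first case, and of $B_1(\mu/m)B_3(\mu/m)=(\mu/m)^4-2(\mu/m)^3+\tfrac54(\mu/m)^2-\tfrac14(\mu/m)$ in the second, the $\mu=0$ term being harmless there since $B_3(0)=0$. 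Each such sum is then evaluated termwise using the Faulhaber formulas for $\sum_{\mu=0}^{m-1}\mu^r$, $r\le 4$, together with $\sum_{\mu=0}^{m-1}\tfrac{1}{36}=\tfrac{m}{36}$ for the constant part of $S^2$.

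What is left is to reduce everything to the common denominator $m^3$ and simplify. One checks that all odd powers of $m$ cancel in the numerators, leaving $S^2(1,m)=\dfrac{m^4+10m^2-6}{180m^3}$ and $S^3(1,m)=\dfrac{-m^4+5m^2-4}{120m^3}$, and the parity relations recalled above then give (i) and (ii) in full. The only genuine work is the bookkeeping in this last step: one must expand the degree-$4$ polynomials correctly, remember that the $\mu=0$ (equivalently $\mu=m$) boundary term contributes $\bar B_2(0)^2=\tfrac{1}{36}$ to $S^2$ but nothing to $S^3$, and confirm that the numerators collapse to the stated even polynomials. Useful consistency checks are the small values $S^2(1,1)=\tfrac{1}{36}$, $S^3(1,1)=0$, and the asymptotics $m^{-1}S^2(1,m)\to\int_0^1 B_2(x)^2\,dx=\tfrac{1}{180}$, $m^{-1}S^3(1,m)\to\int_0^1 B_1(x)B_3(x)\,dx=-\tfrac{1}{120}$, which pin down the leading coefficients.
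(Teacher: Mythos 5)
Your computation is correct, but be aware that the paper does not prove this lemma at all: it simply quotes the closed forms from the tables in Lang's paper (\cite[Eq. 4.3--4.19]{LAN}), so any argument here is necessarily your own route. Your route --- reduce to $a=1$ via the parity $\bar{B}_r(-x)=(-1)^r\bar{B}_r(x)$, replace the periodic Bernoulli functions by the Bernoulli polynomials on $[0,1)$, and evaluate termwise with Faulhaber's formulas --- is sound, and the bookkeeping does come out exactly as stated: $\sum_{\mu=0}^{m-1}B_1(\mu/m)B_3(\mu/m)=\frac{-m^4+5m^2-4}{120m^3}$ and $\sum_{\mu=0}^{m-1}B_2(\mu/m)^2=\frac{m^4+10m^2-6}{180m^3}$, the $\mu=0$ term contributing $\frac{1}{36}$ only in the second sum (and nothing in the first, since $B_3(0)=0$); your consistency checks at $m=1$ and via $\int_0^1 B_rB_{4-r}$ are also right. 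What this buys over the paper is a self-contained verification in place of a table look-up. The one point you should pin down explicitly is the definition of $S^r$: the paper says ``as defined in \cite{AP50}'', but Apostol's sums $s_p(h,k)=\sum_{\mu \bmod k}\frac{\mu}{k}\bar{B}_p\left(\frac{h\mu}{k}\right)$ involve a single Bernoulli function, and for even $p$ they collapse to $\frac{(k^{1-p}-1)B_p}{2}$ independently of $h$, which for $p=2$ gives $\frac{1-m}{12m}$ rather than the value in (ii). The values asserted in the lemma are those of Lang's two-factor sums $S^{r}(a,c)=\sum_{\mu \bmod c}\bar{B}_r\left(\frac{a\mu}{c}\right)\bar{B}_{4-r}\left(\frac{\mu}{c}\right)$, which is exactly the definition your calculation implicitly uses; state it precisely rather than descriptively, since at $a=\pm 1$ the asymmetry between the two factors is invisible and a reader could otherwise not tell which convention your identities are verifying.
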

\begin{lemma}\label{DS2} For any positive even integer $m$, we have
\begin{itemize}
\item[(i)] $S^3(m\pm 1, 2m)=\pm S^1(m+1, 2m)={\displaystyle\mp\frac{m^4-50m^2+4}
{960m^3}}.$ \vspace*{2mm}
\item[(ii)] $S^2(m-1, 2m)=S^2(m+1, 2m)={\displaystyle\frac{m^4+100m^2-6}
{1440m^3}}.$\vspace*{2mm}
\end{itemize}
\end{lemma}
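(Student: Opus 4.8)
The plan is to evaluate these sums directly from the definition of Apostol's generalized Dedekind sum $S^{p}(h,k)$ as a finite sum over a complete residue system modulo $k$ of a product of two periodic Bernoulli functions, one of whose arguments is dilated by $h$, and to reduce everything to the values $S^{p}(\pm 1,\cdot)$ recorded in Lemma~\ref{DS1}. The device that makes this work is that the modulus here is $2m$, so that the residues $\{0,1,\dots,2m-1\}$ split naturally into the even residues $\mu=2\nu$ $(\nu\bmod m)$ and the odd residues $\mu=2\nu+1$, and that $m\pm 1\equiv \pm 1\pmod m$.

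First I would dispose of the sign relations. Since $m-1\equiv -(m+1)\pmod{2m}$, replacing $h$ by $-h$ and invoking the parity identity $\bar B_{p}(-x)=(-1)^{p}\bar B_{p}(x)$ gives $S^{3}(m-1,2m)=-S^{3}(m+1,2m)$ and $S^{2}(m-1,2m)=S^{2}(m+1,2m)$; this produces the $\pm$ in part (i) and the first equality in part (ii), and reduces the problem to computing $S^{3}(m+1,2m)$ and $S^{2}(m+1,2m)$. In $S^{p}(m+1,2m)$ the even residues contribute, after using $(m+1)(2\nu)/(2m)=\nu+\nu/m$ and periodicity, exactly $S^{p}(1,m)$, which is already evaluated in Lemma~\ref{DS1}.

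The heart of the matter is the odd-residue contribution, $\mu=2\nu+1$ with $0\le\nu\le m-1$. Here $(m+1)(2\nu+1)/(2m)$ equals an integer plus $\tfrac12+(2\nu+1)/(2m)$, and the half-integer shift is eliminated by the duplication formula $\bar B_{p}\!\left(x+\tfrac12\right)=2^{1-p}\bar B_{p}(2x)-\bar B_{p}(x)$ with $x=(2\nu+1)/(2m)$. This writes the odd part as a linear combination of a sum against $\bar B_{p}\!\left((2\nu+1)/m\right)$ and a sum against $\bar B_{p}\!\left((2\nu+1)/(2m)\right)$. Each of these is split once more according to the parity of $\nu$; on the sub-ranges where the periodic Bernoulli functions coincide with the Bernoulli polynomials one regroups the terms, after shifting and dilating the index, into copies of $S^{p}(1,m)$, $S^{p}(1,2m)$ and $S^{p}(1,m/2)$ — all supplied by Lemma~\ref{DS1} — together with complete sums $\sum_{j=0}^{N-1}\bar B_{p}(j/N)=N^{1-p}B_{p}$. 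Adding the even and odd parts and simplifying the resulting rational function of $m$ produces $\mp(m^{4}-50m^{2}+4)/(960m^{3})$ and $(m^{4}+100m^{2}-6)/(1440m^{3})$; the factors $960=8\cdot 120$ and $1440=8\cdot 180$ relative to Lemma~\ref{DS1} are exactly the $2^{1-p}$ and $(2m)^{3}$ contributions from the odd part. Finally, the equality $S^{3}(m+1,2m)=S^{1}(m+1,2m)$ in part (i) follows because $(m+1)^{2}\equiv 1\pmod{2m}$ for even $m$, so $m+1$ is its own inverse modulo $2m$ and the substitution $\mu\mapsto(m+1)\mu$ interchanges the two Bernoulli factors in the defining sum; alternatively one runs the same computation with $p=1$ and compares.

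The one genuine obstacle is the odd-residue part: after the duplication formula one must track carefully which values of $\nu$ push $(2\nu+1)/m$ and $(2\nu+1)/(2m)$ into which period, so that the periodic Bernoulli functions may legitimately be replaced by polynomials and the sums recombined as (shifted, dilated) instances of those in Lemma~\ref{DS1}; this bookkeeping, together with the lengthy but entirely mechanical algebra collapsing the linear combination of Lemma~\ref{DS1} values to a single rational function, is where all the work sits. One must also fix Apostol's normalization of $S^{p}(h,k)$ at the residue $\mu\equiv 0$ and apply it consistently throughout.
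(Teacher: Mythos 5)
Your proposal is essentially sound, but note first that the paper itself does not prove this lemma: Lemmas \ref{DS1} and \ref{DS2} are simply quoted from Lang (\cite[pp.~155--157, Eq.~4.3--4.19]{LAN}), where these special values are obtained in the course of his computation of the class invariants. So your even/odd reduction is a genuinely different, self-contained route, and its skeleton does check out: splitting the residues modulo $2m$ into $\mu=2\nu$, which collapses to $S^{i}(1,m)$, and $\mu=2\nu+1$, then applying $\bar B_p\bigl(x+\tfrac12\bigr)=2^{1-p}\bar B_p(2x)-\bar B_p(x)$ to the dilated factor and regrouping, one lands on the identity $S^{i}(m+1,2m)=2S^{i}(1,m)+\tfrac14\bigl(S^{i}(1,m)-S^{i}(1,m/2)\bigr)-S^{i}(1,2m)$ for $i=2,3$, and substituting the values of Lemma \ref{DS1} does reproduce exactly $\mp(m^4-50m^2+4)/(960m^3)$ and $(m^4+100m^2-6)/(1440m^3)$; likewise your sign relations from $m-1\equiv-(m+1)\pmod{2m}$ and the equality $S^3(m+1,2m)=S^1(m+1,2m)$ from $(m+1)^2\equiv1\pmod{2m}$ are correct. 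Two points need repair in the write-up. First, the regrouping of the cross terms $\sum_{\nu}\bar B\bigl(\tfrac{2\nu+1}{2m}\bigr)\bar B\bigl(\tfrac{2\nu+1}{m}\bigr)$ is not by the parity of $\nu$ but by the pairing $\nu\leftrightarrow\nu+m/2$ (i.e.\ grouping the odd residues mod $2m$ by their class mod $m$), after which the two undilated factors, whose arguments differ by $\tfrac12$, recombine by the same duplication formula into the odd part of the diagonal sum mod $m$, namely $S^{i}(1,m)-S^{i}(1,m/2)$. Second, and more importantly, you must not literally ``fix Apostol's normalization'': the values in Lemmas \ref{DS1}(ii) and \ref{DS2}(ii) are those of Lang's sums, in which $\bar B_2(a\mu/c)$ is paired with $\bar B_2(\mu/c)$ (and $\bar B_3$ with $\bar B_1$, $\bar B_1$ with $\bar B_3$), not Apostol's $s_2(h,k)=\sum_{\mu}\tfrac{\mu}{k}\bar B_2(h\mu/k)$ from \cite{AP50}; with the latter, Lemma \ref{DS1}(ii) is already false (e.g.\ $s_2(1,2)=-\tfrac1{24}\neq\tfrac5{144}$), so the $S^2$ computation would not close. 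Work with the symmetric two-Bernoulli-factor definition under which Lemma \ref{DS1} holds (this is also what makes your inverse-substitution argument for $S^3=S^1$ legitimate), and your reduction, together with the mechanical algebra you defer, constitutes a complete and more elementary verification than the paper's bare citation.
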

We determine Dedekind zeta values attached to $k_n$ in two ways 
using Theorem \ref{thm2.1} and Theorem \ref{thm2.2} and then 
compare these values and finally use elementary group theoretic 
arguments to establish our results.  

\section{Proof of the results}
Throughout this section $d=n^2+1$ is square-free; $h(d)$ and $
\mathfrak{C}(k_n)$ denote the class number and the class group of 
$k_n=\mathbb{Q}(\sqrt{d})$ respectively. Note that $d\equiv 
1,2,5,6\pmod 8$ in this situation. For us $\mathfrak{P}$ will always refer 
to principal ideal class in the corresponding class group. Let
\begin{align*}
\mathfrak{S}_n:&=\{p\mid n: p \text{ is an odd prime number}\},\\
\mathcal{N}:& =\# \mathfrak{S}_n.
\end{align*}
The following proposition will be needed in the subsequent results. 
\begin{proposition}\label{prop1}
If $\mathcal{N}\geq 3$ then 
$$h(d) \geq \begin{cases} \mathcal{N}+2 & \text{ if }  d \equiv 
1,2,6 \pmod 8, \\
\mathcal{N}+1 & \text{ if }  d \equiv 5 \pmod 8. \\
\end{cases}$$
\end{proposition}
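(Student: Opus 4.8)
The plan is to produce $\mathcal{N}+1$ (resp.\ $\mathcal{N}+2$) pairwise distinct classes in $\mathfrak{C}(k_n)$ represented by prime ideals of small norm; the engine behind ``distinct'' is a norm obstruction peculiar to these fields. Since $d=n^2+1$, the continued fraction expansion is $\sqrt{d}=[n;\overline{2n}]$, of period $1$, so every convergent $p_i/q_i$ of $\sqrt{d}$ satisfies $p_i^2-dq_i^2=\pm 1$. By the classical criterion that a solution of $x^2-dy^2=\pm m$ with $0<m<\sqrt{d}$ comes, after removing $g=\gcd(x,y)$, from a convergent of $\sqrt{d}$, we get: if $0<m<\sqrt{d}$ and $x^2-dy^2=\pm m$ is solvable in integers then $m/g^{2}=1$, i.e.\ $m$ is a perfect square. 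Reading this inside $\mathcal{O}_{k_n}=\mathbb{Z}[\sqrt{d}]$ (when $n$ is odd) or $\mathcal{O}_{k_n}=\mathbb{Z}[\frac{1+\sqrt{d}}{2}]$ (when $n$ is even, so $d\equiv 1\pmod 4$, and an element of norm $m$ yields $u^2-dv^2=\pm 4m$), I would record the working lemma: if $m$ is a positive non-square integer with $m<\sqrt{d}$ (for $n$ odd) or $4m<\sqrt{d}$ (for $n$ even), then no $\alpha\in\mathcal{O}_{k_n}$ has $|N_{k_n/\mathbb{Q}}(\alpha)|=m$; equivalently, no principal integral ideal has norm $m$.

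Next I would collect the splitting data. For $p\in\mathfrak{S}_n$ we have $d=n^2+1\equiv 1\pmod p$, so $\left(\frac{d}{p}\right)=1$ and, $p$ being odd and unramified, $p$ splits: $p\mathcal{O}_{k_n}=\mathfrak{p}_p\mathfrak{p}_p'$ with $N(\mathfrak{p}_p)=p$ and $\mathfrak{p}_p\mathfrak{p}_p'=(p)$. For the prime $2$: if $d\equiv 1\pmod 8$ then $4\mid n$ and $2\mathcal{O}_{k_n}=\mathfrak{q}\mathfrak{q}'$ with $\mathfrak{q}\mathfrak{q}'=(2)$; if $d\equiv 2,6\pmod 8$ then $n$ is odd and $2\mathcal{O}_{k_n}=\mathfrak{q}^{2}$ with $\mathfrak{q}^{2}=(2)$; in both of these cases $N(\mathfrak{q})=2$. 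If $d\equiv 5\pmod 8$ then $2$ is inert and $(2)$ is already prime and principal, so I would not use it. Writing $n=2^{e}n_0$ with $n_0$ odd having exactly $\mathcal{N}$ prime factors, the hypothesis $\mathcal{N}\geq 3$ gives $p\leq n_0/15$ for each $p\in\mathfrak{S}_n$ (divide $n_0$ by two further odd prime factors, which are $\geq 3$ and $\geq 5$) and $pq\leq n_0/3$ for distinct $p,q\in\mathfrak{S}_n$; moreover $n\geq n_0\geq 105$ and $\sqrt{d}>n$. When $n$ is odd, $n_0=n$, so $p$, $pq$, $2$, $2p$ are all $<\sqrt{d}$; when $n$ is even, $n_0\leq n/2$, so $4p$, $4pq$, $8$, $8p$ are all $<n<\sqrt{d}$. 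Since none of $p$, $pq$, $2$, $2p$ is a perfect square ($p\neq q$ being odd primes), the working lemma shows that $\mathfrak{p}_p$ is non‑principal for every $p$, that $\mathfrak{p}_p\mathfrak{p}_q'$ is non‑principal for $p\neq q$, and, in the cases $d\equiv 1,2,6\pmod 8$, that $\mathfrak{q}$ and $\mathfrak{q}\mathfrak{p}_p'$ are non‑principal.

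Finally I would assemble the count. From $\mathfrak{p}_p\mathfrak{p}_p'=(p)$ we get $[\mathfrak{p}_p']=[\mathfrak{p}_p]^{-1}$, so an equality $[\mathfrak{p}_p]=[\mathfrak{p}_q]$ would force $\mathfrak{p}_p\mathfrak{p}_q'$ to be principal; likewise $[\mathfrak{q}]=[\mathfrak{p}_p]$ would force $\mathfrak{q}\mathfrak{p}_p'$ to be principal, and $[\mathfrak{p}_p]=\mathfrak{P}$ (resp.\ $[\mathfrak{q}]=\mathfrak{P}$) would make $\mathfrak{p}_p$ (resp.\ $\mathfrak{q}$) principal. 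By the previous paragraph all of these are impossible, hence $\{\mathfrak{P}\}\cup\{[\mathfrak{p}_p]:p\in\mathfrak{S}_n\}$ consists of $\mathcal{N}+1$ distinct classes; and when $d\equiv 1,2,6\pmod 8$ adjoining $[\mathfrak{q}]$ yields $\mathcal{N}+2$ distinct classes. Therefore $h(d)\geq\mathcal{N}+1$, resp.\ $h(d)\geq\mathcal{N}+2$.

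The step I expect to be the main obstacle is the working lemma: selecting and correctly invoking the classical continued‑fraction criterion and checking that it transfers to the half‑integer ring $\mathbb{Z}[\frac{1+\sqrt{d}}{2}]$. Beyond that, the only delicate point is the bookkeeping of the norm bounds in the $n$ even cases, where one must exploit that $n$ being even provides enough room below the threshold $\sqrt{d}$ after the extra factor $4$.
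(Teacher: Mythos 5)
Your argument is correct, but it proves the proposition by a genuinely different route than the paper. The paper's proof stays inside its zeta-value machinery: for the classes $\mathfrak{A}_i$ above the odd primes $p_i\mid n$ and the class $\mathfrak{B}$ above $2$, it computes the partial values $\zeta_{k_n}(-1,\mathfrak{A}_i)=\frac{n^3+n(4p_i^4+10p_i^2)}{360p_i^2}$, $\zeta_{k_n}(-1,\mathfrak{B})=\frac{n^3+104n}{1440}$ via Lang's formula and Dedekind sums, compares them with the principal value $\frac{n^3+14n}{360}$ of Byeon--Kim, and observes that any coincidence would force $n\in\{4,\,2p_i,\,4p_i,\,2p_ip_j\}$, contradicting $\mathcal{N}\geq 3$; hence the classes are pairwise distinct. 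You instead use the classical Legendre/continued-fraction criterion together with the period-one expansion $\sqrt{n^2+1}=[n;\overline{2n}]$ to get a norm obstruction (no principal integral ideal of non-square norm $m$ with $m<\sqrt d$, resp. $4m<\sqrt d$ in the ring $\mathbb{Z}[\frac{1+\sqrt d}{2}]$), and then rule out principality of $\mathfrak{p}_p$, $\mathfrak{p}_p\mathfrak{p}_q'$, $\mathfrak{q}$, $\mathfrak{q}\mathfrak{p}_p'$ directly; the inequalities $p\leq n_0/15$, $pq\leq n_0/3$ coming from $\mathcal{N}\geq 3$ give exactly the room needed, including the extra factor $4$ in the even-$n$ cases, and the passage to the half-integer ring is sound because a norm-$m$ element yields $u^2-dv^2=\pm 4m$ and $4m$ is a square iff $m$ is. Your route is more elementary and self-contained (no Lang formula, no generalized Dedekind sums, no principal-class zeta value needed), and it even yields the stronger statement that the specific ideals of norms $p$, $pq$, $2$, $2p$ are non-principal; what the paper's route buys is uniformity, since the same partial zeta computations are reused throughout to prove Theorems 1--3 and to evaluate $\zeta_{k_n}(-1)$, which your argument does not provide. (Incidentally, the subcase $d\equiv 6\pmod 8$ is vacuous for $d=n^2+1$, as your case split implicitly reflects.)
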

\begin{proof}
We will provide the complete proof for the case $d \equiv 1 \pmod 
8$ and other cases can be handled along the same lines.  In this case 
$2$ splits in  $k_n$ as:
\begin{equation}\label{eq2}
(2)=\left(2,\frac{1+\sqrt{d}}{2}\right)\left(2,\frac{1-\sqrt{d}}{2}
\right).
\end{equation}
Also $p_i\in\mathfrak{S}_n$ splits in $k_n$ as:
\begin{equation}\label{eq1}
(p_i)=\left(p_i,\frac{1+\sqrt{d}}{2}\right)\left(p_i,\frac{1-
\sqrt{d}}{2}\right).
\end{equation}
Let $\mathfrak{A}_i$ and $\mathfrak{B}$ be two ideal classes in 
$k_n$ such that $\mathfrak{a}_i=\left(p_i,\frac{1+\sqrt{d}}{2}
\right) \in \mathfrak{A}_i$ and $\mathfrak{b}\left(2,\frac{1+
\sqrt{d}}{2}\right) \in \mathfrak{B}$. Then $\mathfrak{a}_i^{-1}=
\left(p_i,\frac{1-\sqrt{d}}{2}\right) \in \mathfrak{A}_i^{-1}$ and 
$\mathfrak{b}^{-1}=\left(2,\frac{1-\sqrt{d}}{2}\right) \in 
\mathfrak{B}^{-1}$. Consider a non-zero $\mathbb{Z}$-module $M_i=
\Big[p_i,\frac{1-\sqrt{d}}{2}\Big]$ in $\mathcal{O}_{k_n}$. 
Then, by \cite[Propositions 2.6 and 2.11]{Fra}, $M_{i}$ is an 
integral ideal and $N(M_{i})=p_{i}$. Also $N(\mathfrak{a}_{i}
^{-1})=p_i$ and $M_i \subseteq \mathfrak{a}_i^{-1}$, therefore \
$M_i=\mathfrak{a}_i^{-1}$. Hence $\{p_i,\frac{1-\sqrt{d}}{2}\}$  is 
an integral basis for $\mathfrak{a}_i^{-1}$, i.e. $\mathfrak{a}
_i^{-1}=\mathbb{Z}p_i \oplus \mathbb{Z}\frac{(1-\sqrt{d})}{2}$. 
Similarly, 
$\{2,\frac{(1-\sqrt{d})}{2}\}$ is an integral basis for 
$\mathfrak{b}^{-1}$, i.e. $\mathfrak{b}^{-1}= \mathbb{Z}2 \oplus 
\mathbb{Z} \frac{(1-\sqrt{d})}{2}$.  Now using Lemma \ref{2.1}, 
Lemma \ref{DS1} and Theorem \ref{thm2.2}, we obtain  
$$\zeta_{k_n}(-1,\mathfrak{A}_{i})=\frac{n^3+n(4p_{i}^4+10p_{i}^2)}
{360p_{i}^2},$$ and $$\zeta_{k_n}(-1,\mathfrak{B})=\frac{n^3+104n}
{1440}.$$
Also by \cite[Theorem 2.3]{BK1},
\begin{equation}\label{eq3}
\zeta_{k_n}(-1, \mathfrak{P})=\frac{n^3+14n}{360}.
\end{equation}
If $\zeta_{k_n}(-1,\mathfrak{A}_{i})=\zeta_{k_n}(-1, \mathfrak{P})
$, then we must have $n=2p_i$. This contradicts the fact that $
\mathcal{N}\geq 3$. Similarly, $\zeta_{k_n}(-1,\mathfrak{B})=
\zeta_{k_n}(-1, \mathfrak{P})$ gives $n=4$, which is again a 
contradiction. Also, $\zeta_{k_n}(-1,\mathfrak{B})=\zeta_{k_n}(-1,
\mathfrak{A}_{i})$ implies $n=4p_i$, which is not possible. Finally 
if $\zeta_{k_n}(-1,\mathfrak{A}_{i})=\zeta_{k_n}(-1,\mathfrak{A}
_{j})$ for $i \neq j$, then $n=2p_ip_j$. This again contradicts the 
fact 
that $\mathcal{N}\geq 3$.
 
Therefore $\mathfrak{A}_{i}~ (1\leq i\leq \mathcal{N})$ and $
\mathcal{B}$ are distinct non-principal ideal classes in $k_n$, and 
thus $h(d) \geq \mathcal{N}+2$. 
\end{proof}
As a consequence, we obtain the following interesting result.
\begin{corollary} $h(d) \to \infty$ as $\mathcal{N} \to \infty$.
\end{corollary}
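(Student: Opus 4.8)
The plan is to deduce the Corollary directly from Proposition \ref{prop1}, so almost no new argument is required. Since $d=n^2+1$ is square-free it lies in one of the residue classes $1,2,5,6\pmod 8$, and in each of these Proposition \ref{prop1} provides, as soon as $\mathcal{N}\geq 3$, the bound $h(d)\geq\mathcal{N}+1$ (indeed $h(d)\geq\mathcal{N}+2$ unless $d\equiv 5\pmod 8$). Hence, given any target $M$, choosing $\mathcal{N}\geq\max\{3,M\}$ forces $h(d)\geq M$; letting $\mathcal{N}\to\infty$ this says precisely that $h(d)\to\infty$.

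To see that the statement is not vacuous --- i.e. that $\mathcal{N}$ really can be taken arbitrarily large while keeping $n^2+1$ square-free --- I would fix $k$ distinct odd primes $p_1,\dots,p_k$ and let $n$ range over the positive multiples of $p_1\cdots p_k$. Every such $n$ has $\mathcal{N}\geq k$, and because $\gcd(n,n^2+1)=1$ the primes $p_1,\dots,p_k$ never divide $n^2+1$, so they pose no obstruction to its square-freeness. A standard square-free sieve for the polynomial $X^2+1$ restricted to this arithmetic progression then shows that a positive proportion of these $n$ have $n^2+1$ square-free, so infinitely many admissible $n$ with $\mathcal{N}\geq k$ exist, for every $k$.

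The hard part, in the sense of the only place a nontrivial ingredient enters, is this last sieve input (a Hooley-type estimate for square-free values of $X^2+1$ in arithmetic progressions). It is classical, and in fact for the bare limit assertion of the Corollary it is not needed at all: the inequality $h(d)\geq\mathcal{N}+1$ coming from Proposition \ref{prop1} is by itself the whole proof.
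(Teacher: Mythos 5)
Your proof is correct and is essentially the paper's own argument: the corollary follows immediately from Proposition \ref{prop1}, since $h(d)\geq \mathcal{N}+1$ once $\mathcal{N}\geq 3$. The additional sieve discussion about the existence of square-free $n^2+1$ with many prime factors of $n$ is a reasonable non-vacuity remark but, as you note yourself, is not required for the statement.
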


\subsection*{Proof of Theorem \ref{thm1}} 
We give the proof of the first part in details, and then we give 
the outline of the second part.
In the case $ n=2^mp^t$,  
both $2$ and $p$ split in $k_n$ as in \eqref{eq2} and \eqref{eq1} 
respectively.

Let $\mathfrak{A}$ be the ideal class in $k_n$ such that $
\mathfrak{a}=\left(p,\frac{1+\sqrt{d}}{2}\right) \in \mathfrak{A}$. 
By multiplication formula for ideals in quadratic fields \cite[p. 
48]{MOL}, we will have $\left(p,\frac{1+\sqrt{d}}{2}\right)^2=
\left(p^2,\frac{1+
\sqrt{d}}{2}\right)$ and $\left(p,\frac{1-\sqrt{d}}{2}\right)^2=
\left(p^2,\frac{1-\sqrt{d}}{2}\right)$. Then $\mathfrak{a}^{-1}=
\left(p,\frac{1-\sqrt{d}}{2}\right) \in \mathfrak{A}^{-1}$, $
\mathfrak{a}^2=\left(p^2,\frac{1+\sqrt{d}}{2}\right) \in 
\mathfrak{A}^2$ and $(\mathfrak{a}^2)^{-1}=\left(p^2,\frac{1-
\sqrt{d}}{2}\right) \in (\mathfrak{A}^2)^{-1}$.
Consider nonzero $\mathbb{Z}$-modules $M_{r}=\Big[p^r,
\frac{1-\sqrt{d}}{2}\Big]$ in $\mathcal{O}_{k_n}$, where $r \in \{1,2\}$. Then again by \cite[Propositions 2.6 and 2.11]{Fra}, $M_{r}$ is an ideal and $N(M_{r})=p^r$, for all $r \in \{1,2\}$. Also $N((\mathfrak{a}^r)^{-1})=p^r$ and $M_{r} \subseteq (\mathfrak{a}^r)^{-1}$, for all $r \in \{1,2\}$. Therefore $M_{r} = 
(\mathfrak{a}^r)^{-1}$, for all $r \in \{1,2\}$. Hence $\{ p,\frac{1-\sqrt{d}}{2}  \}$ and $\{ p^2,\frac{1-\sqrt{d}}{2} \}$ are an integral basis for $\mathfrak{a}^{-1}$ 
and $(\mathfrak{a}^2)^{-1}$ respectively. Now using Lemma 
\ref{2.1}, Lemma \ref{DS1} and Theorem \ref{thm2.2}, we obtain $$
\zeta_{k_n}(-1,\mathfrak{A})=\frac{n^3+n(4p^4+10p^2)}{360p^2},$$ 
and $$\zeta_{k_n}(-1,\mathfrak{A}^2)=\frac{n^3+n(4p^8+10p^4)}
{360p^4}.$$
Also $\zeta_{k_n}(-1, \mathfrak{P})$ is given by \eqref{eq3}.
We claim that $\mathfrak{A}$ is a generator of $\mathfrak{C}(k_n)$. 
To see this we pairwise equate $\zeta_{k_n}(-1, \mathfrak{P})$, $
\zeta_{k_n}(-1, \mathfrak{A})$ and $\zeta_{k_n}(-1, \mathfrak{A}^2)
$ to find that $n=2p, 2p^2$. These values are not possible since 
$m, t\geq 2.$ Thus order of $\mathfrak{A}$ is greater than $2$. 
This establishes our claim as $h(d)=4$.
Therefore $\mathfrak{C}(k_n)\cong \mathbb{Z}/4\mathbb{Z}$. 

Now 
$$\zeta_{k_n}(-1)=\zeta_{k_n}(-1, \mathfrak{P})+\zeta_{k_n}(-1, 
\mathfrak{A})+\zeta_{k_n}(-1, \mathfrak{A}^2)+\zeta_{k_n}(-1, 
\mathfrak{A}^{-1}).$$
Thus,
$$\zeta_{k_n}(-1)=\frac{n^3+14n}{360}+\frac{n^3+n(4p^4+10p^2)}
{180p^2}+\frac{n^3+n(4p^8+10p^4)}{360p^4}.$$
This completes the proof of (I) of Theorem \ref{thm1}.

In the case $n=2^mp^sq^t$, both $p$ 
and $q$ split as in \eqref{eq1}, and $2$ splits as in \eqref{eq2}. 
Let $\mathfrak{A}, ~ \mathfrak{B}$ and $\mathfrak{C}$ be the three 
ideal classes in $k_n$ such that $\left(p,\frac{1+\sqrt{d}}{2}
\right) \in \mathfrak{A}$, $\left(q,\frac{1+\sqrt{d}}{2}\right) \in 
\mathfrak{B}$ and $\left(2,\frac{1+\sqrt{d}}{2}\right) \in 
\mathfrak{C}$. Then analogous to the previous case, we obtain
\begin{align*}
\zeta_{k_n}(-1,\mathfrak{A})&=\frac{n^3+n(4p^4+10p^2)}{360p^2},\\
\zeta_{k_n}(-1,\mathfrak{B})&=\frac{n^3+n(4q^4+10q^2)}{360q^2},\\
\zeta_{k_n}(-1,\mathfrak{C})&=\frac{n^3+104n}{1440}.
\end{align*}
Equating pairwise these values as before, we get $$n=4, 2p, 2q, 
2pq, 4p, 4q.$$  Since $h(d)=4$, $2^2|n$ and $s, t\geq 1$, these 
values of $n$ do not arise. Hence $\mathfrak{A}, ~ \mathfrak{B}$ 
and $\mathfrak{C}$ are distinct non-principal ideal classes in $k_n
$.  
Using these three non-principal ideal classes, we can prove (II) of Theorem \ref{thm1} following  similar arguments.

\subsection*{Proof of Theorem \ref{thm2}} 
We first consider the case $n=2p^t$ for some integer $t\geq 3$, and thus 
$p$ splits in $k_n$ similar to \eqref{eq1}. 

Let $\mathfrak{A}$ be an ideal class containing $\mathfrak{a}=
\left(p,\frac{1+\sqrt{d}}{2}\right)$. Then again by multiplication 
formula for ideals in quadratic fields \cite[p. 
48]{MOL}, $\mathfrak{a}^2=\left(p,\frac{1+\sqrt{d}}{2}\right)^2= 
\left(p^2,\frac{1+\sqrt{d}}{2}\right)$  and $(\mathfrak{a}^2)^{-1}=
\left(p,\frac{1-\sqrt{d}}{2}\right)^2= \left(p^2,\frac{1-\sqrt{d}}
{2}\right)$. Thus $\mathfrak{a}^2= \left(p^2,\frac{1+\sqrt{d}}{2}
\right) \in \mathfrak{A}^2$, $\mathfrak{a}^{-1}=\left(p,\frac{1-
\sqrt{d}}{2}\right) \in \mathfrak{A}^{-1}$ and $(\mathfrak{a}
^2)^{-1}= \left(p^2,\frac{1-\sqrt{d}}{2}\right) \in (\mathfrak{A}
^2)^{-1}$. As proved in the proof of Theorem \ref{thm1}, $\{ p,\frac{1-\sqrt{d}}
{2}  \}$ and $\{ p^2,\frac{1-\sqrt{d}}{2} \}$ are integral basis 
for $\mathfrak{a}^{-1}$ and $(\mathfrak{a}^2)^{-1}$ respectively. 
Now by using Lemma \ref{2.1}, Lemma \ref{DS1} and Theorem 
\ref{thm2.2} we obtain:
\begin{align*}
\zeta_{k_n}(-1, \mathfrak{A})&=\frac{n^3+n(4p^4+10p^2)}{360p^2},\\
\zeta_{k_n}(-1, \mathfrak{A}^2)&=\frac{n^3+n(4p^8+10p^4)}{360p^4}.
\end{align*}
As in \S2.1, we observe that $\mathfrak{A}$ and $\mathfrak{A}^2$ 
are distinct non-principal ideals classes in $k_n$ since $t\geq 3$. 
This implies that the order of $\mathfrak{A}$ in $\mathfrak{C}(k_n)
$ is greater than $2$. Therefore the order of $\mathfrak{A}$ in $
\mathfrak{C}(k_n)$ is $4$ and hence $\mathfrak{C}(k_n)\cong 
\mathbb{Z}/4\mathbb{Z}$.

Since $\mathfrak{A}$ is a generator and $\mathfrak{P}$ is principal 
in $\mathfrak{C}(k_n)$, we have
$$\zeta_{k_n}(-1)=\zeta_{k_n}(-1, \mathfrak{P})+\zeta_{k_n}(-1, 
\mathfrak{A})+\zeta_{k_n}(-1, \mathfrak{A}^2)+\zeta_{k_n}(-1, 
\mathfrak{A}^{-1}).$$
Thus,
$$\zeta_{k_n}(-1)=\frac{n^3+14n}{360}+\frac{n^3+n(4p^4+10p^2)}
{180p^2}+\frac{n^3+n(4p^8+10p^4)}{360p^4}.$$
This completes the proof of (I) of Theorem \ref{thm2}.

In the next case, both $p$ and $q$ split 
as in \eqref{eq1}. 
Let $\mathfrak{A}$ and $\mathfrak{B}$ are two ideal classes in $k_n
$ such that $\left(p,\frac{1+\sqrt{d}}{2}\right) \in \mathfrak{A}$ 
and $\left(q,\frac{1+\sqrt{d}}{2}\right) \in \mathfrak{B}$. As in 
\S2.1, we see that both $\mathfrak{A}$ and $\mathfrak{B}$ are 
distinct non-principal in $\mathfrak{C}(k_n)$ since either $s>2$ or 
$t>2$.

As $\left(p,\frac{1-\sqrt{d}}{2}\right) \in \mathfrak{A}^{-1}$ and 
$\left(q,\frac{1-\sqrt{d}}{2}\right) \in \mathfrak{B}^{-1}$, we 
have
\begin{align*}
\zeta_{k_n}(-1,\mathfrak{A}^{-1})=\frac{n^3+n(4p^4+10p^2)}{360p^2},
\\
\zeta_{k_n}(-1,\mathfrak{B}^{-1})=\frac{n^3+n(4q^4+10q^2)}{360q^2}.
\end{align*}
Also $\zeta_{k_n}(-1,\mathfrak{A}^{-1})=\zeta_{k_n}(-1,
\mathfrak{A})$
and $\zeta_{k_n}(-1,\mathfrak{B}^{-1})=\zeta_{k_n}(-1
\mathfrak{B})$. If $\mathfrak{C}(k_n)\cong \mathbb{Z}/4\mathbb{Z}
$, then either $\mathfrak{A}$ or $\mathfrak{B}$ is a generator.
Let it be $\mathfrak{A}$. Then $\mathfrak{B}\neq \mathfrak{A}^{-1}
$, as the corresponding partial Dedekind zeta values are not equal. 
This forces that $\mathfrak{A}^2=\mathfrak{B}$, and therefore $
\zeta_{k_n}(-1,\mathfrak{A}^2)=\zeta_{k_n}(-1,\mathfrak{B})$, i.e.
$$
\frac{n^3+n(4p^8+10p^4)}{360p^4}=\frac{n^3+n(4q^4+10q^2)}{360q^2}.
$$ 
On simplification, we get $n=2p^2q$, which contradicts the
assumption on $n$, and hence contradiction to our assumption that $
\mathfrak{C}(k_n)\cong \mathbb{Z}/4\mathbb{Z}
$. 
Therefore the only possibility is that, the 
order of each of $\mathfrak{A}$ and $\mathfrak{B}$ must be $2$. 
This completes the proof of (II) of Theorem \ref{thm2}.

In the last case, that is when $n=2p^rq^s \ell^t$, we see that $p,q$ and $
\ell$ split in $k_n$ as in \eqref{eq1}.
If $\mathfrak{A}, ~ \mathfrak{B}$ and $\mathfrak{C}$ are ideal 
classes in $k_n$ containing $\left(p,\frac{1+\sqrt{d}}{2}\right),~
\left(q,\frac{1+\sqrt{d}}{2}\right)$ and $\left(\ell,\frac{1+
\sqrt{d}}{2}\right)$ respectively, then similar to the previous 
cases,
\begin{align*}
\zeta_{k_n}(-1,\mathfrak{A})=\frac{n^3+n(4p^4+10p^2)}{360p^2},\\
\zeta_{k_n}(-1,\mathfrak{B})=\frac{n^3+n(4q^4+10q^2)}{360q^2},\\
\zeta_{k_n}(-1,\mathfrak{C})=\frac{n^3+n(4\ell^4+10\ell^2)}
{360\ell^2}.
\end{align*}
Proceeding as before one can show that these ideal classes are 
distinct and non-principal in $k_n$. Using these ideal classes, we can prove (III) of Theorem \ref{thm2}.

\subsection*{Proof of Theorem \ref{thm3}} Recall that $n$ has at most two prime factors. Also, 
 the zeta value at $-1$ for $\mathfrak{P}$ is given by 
\cite[Theorem 2.3]{BK1}, i.e.,
$$\zeta_{k_n}(-1, \mathfrak{P})=\frac{4n^3+11n}{180}.$$
We give the proof of (I) of Theorem \ref{thm3}, and the analogous arguments work for the remaining part. 

Since $p, q\mid n$, so that  both $p$ and $q$ split in $k_n$, that is,
\begin{align}\label{eqx}
\begin{cases}
(p)=\left(p,1+\sqrt{d}\right)\left(p,1-\sqrt{d}\right), 
\vspace*{2mm}\\ 
(q)=\left(q,1+\sqrt{d}\right)\left(q,1-\sqrt{d}\right).
\end{cases}
\end{align}
Also $(2)=(2,\sqrt{d})^2$. Let $\mathfrak{A}, ~ \mathfrak{B}$ and $
\mathfrak{C}$ be ideal classes in $k_n$ such that $(2,\sqrt{d})\in 
\mathfrak{A}$, $(p,1+\sqrt{d})\in \mathfrak{B}$ and $
(q,1+\sqrt{d})\in\mathfrak{C}$. 
Now if we consider non-zero $\mathbb{Z}$-modules $N=[p,1-\sqrt{d}]$ and $N^{\prime}=[q,1-\sqrt{d}]$ in $\mathcal{O}_{k_n}$, then as before (using \cite[Proposition 2.6 and 2.11]{Fra})  one can prove that $N=(p,1-\sqrt{d})$  and $N^{\prime}=(q,1-\sqrt{d})$. Hence, $\{p,1-\sqrt{d}\}$  and $\{q,1-\sqrt{d}\}$ are an 
integral basis for $(p,1-\sqrt{d})$ and $(q,1-\sqrt{d})$ respectively.
Now as before:
\begin{align*}
\zeta_{k_n}(1, \mathfrak{A}) & =\frac{n^3+14n}{180},\\
\zeta_{k_n}(-1, \mathfrak{B}) & =\frac{4n^3+n(p^4+10p^2)}{180p^2},
\\
\zeta_{k_n}(-1, \mathfrak{C}) & =\frac{4n^3+n(q^4+10q^2)}{180q^2}.
\end{align*}
Employing similar technique, we see that all these three ideal 
classes are distinct and  non-principal in $k_n$.

Clearly $\mathfrak{C}$ is of order $2$. We claim that both $
\mathfrak{A}$ and $\mathfrak{B}$ are of order $2$. We observe from 
\eqref{eqx} that $(p,1-\sqrt{d})\in \mathfrak{A}^{-1}$ 
and $(q,1-\sqrt{d})\in \mathfrak{B}^{-1}$. We see that $
\zeta_{k_n}(-1, \mathfrak{A})=\zeta_{k_n}(-1, \mathfrak{A}^{-1})$ 
and $\zeta_{k_n}(-1, \mathfrak{B})=\zeta_{k_n}(-1, \mathfrak{B}
^{-1})$. Also $\zeta_{k_n}(-1, \mathfrak{A}^{-1}) \neq \zeta_{k_n}
(-1, \mathfrak{B}^{-1}) \neq \zeta_{k_n}(-1, \mathfrak{C})$ and 
hence our claim is proved.  Therefore $\mathfrak{C}(k_n)\cong 
\mathbb{Z}/2\mathbb{Z} \times \mathbb{Z}/2\mathbb{Z}$.

Also 
$$
\zeta_{k_n}(-1)=\zeta_{k_n}(-1, \mathfrak{P})+\zeta_{k_n}(-1, 
\mathfrak{A})+\zeta_{k_n}(-1, \mathfrak{B})+\zeta_{k_n}(-1, 
\mathfrak{C}).
$$
Therefore
$$
\zeta_{k_n}(-1)=\frac{n^3+5n}{36}+\frac{4n^3+n(p^4+10p^2)}{180p^2}+
\frac{4n^3+n(q^4+10q^2)}{180q^2}.
$$
This completes the proof of (I) of Theorem \ref{thm3}. 

In the case when $n$ is power of a single prime, i.e., $n=p^t$ for some 
integer $t\geq 3$, $p$ splits as in the last case.
Following the previous arguments, one can prove (II) of Theorem \ref{thm3}.

\section{Concluding remarks}
Real quadratic fields of the form $k_{n,r}=\mathbb{Q}(\sqrt{n^2+r})
$ (with $n^2+r$ square-free) are called Richaud-Degert(R-D) type if 
$r|4n$ for two integers $n$ and $r$. Furthermore,  if $|r|=1,4$, then 
$k_{n,r}$ is known as narrow R-D type; otherwise $k_{n,r}$ is known 
as extended R-D type field. We have obtained sufficient conditions 
to specify the structure of order $4$ class groups in $k_{n,r}$ when 
$r=1$. Following the same method, we can  obtain sufficient 
conditions to specify order $4$ class groups of 
$k_{n,r}$ when $r=4$. More precisely:
\begin{theorem} 
Let $d=n^2+4$ be a square-free integer, and let 
$h(d)=4$. The following statements hold:
\begin{itemize}
\item[(i)] If $n=p^t$ with $t \geq 2$, then $\mathfrak{C}(k_{n,4})
\cong \mathbb{Z}/4\mathbb{Z}$.
\item[(ii)] If $n=p^rq^s$ with $r\geq 2 $ or $s \geq 2$
then $\mathfrak{C}(k_{n,4}) \cong \mathbb{Z}/2\mathbb{Z}\times 
\mathbb{Z}/2\mathbb{Z}$.
\item[(iii)] If $n$ has more than two distinct odd prime factors, 
then $\mathfrak{C}(k_{n,4}) \cong \mathbb{Z}/2\mathbb{Z}\times 
\mathbb{Z}/2\mathbb{Z}$.
\end{itemize}
\end{theorem}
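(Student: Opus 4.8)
The plan is to mimic exactly the proofs of Theorems~\ref{thm1}--\ref{thm3}, carrying out the same three-step strategy for the field $k_{n,4}=\mathbb{Q}(\sqrt{n^2+4})$: first, write down explicit split (or ramified) prime ideals above the rational primes dividing $n$ and above $2$; second, compute the partial Dedekind zeta values $\zeta_{k_{n,4}}(-1,\mathfrak{A})$ at $-1$ for the corresponding ideal classes $\mathfrak{A}$ using Lang's formula (Theorem~\ref{thm2.2}) together with Lemma~\ref{2.1}, Lemma~\ref{DS1} and Lemma~\ref{DS2}; and third, compare these values pairwise, using the hypothesis $h(d)=4$ and the assumptions on the factorization of $n$ to force the required isomorphism type of $\mathfrak{C}(k_{n,4})$. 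Since $d=n^2+4$ is squarefree, $d$ is odd here (if $n$ is odd then $d\equiv 1\pmod 4$ and the ring of integers is $\mathbb{Z}[\tfrac{1+\sqrt d}{2}]$; if $n$ is even then $4\mid n^2$ forces $d\equiv 0\pmod 4$, contradicting squarefreeness unless $n$ is odd), so throughout $n$ is odd, $d\equiv 1\pmod 4$, and we work with the basis $\{1,\tfrac{1+\sqrt d}{2}\}$ of $\mathcal{O}_{k_{n,4}}$. The fundamental unit is $\varepsilon=\tfrac{n+\sqrt d}{2}$ with $N(\varepsilon)=-1$, which is the analogue of the unit $n+\sqrt{n^2+1}$ used in the $r=1$ case and is what makes the Dedekind-sum evaluations in Lemmas~\ref{DS1}--\ref{DS2} directly applicable.

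For part~(i), with $n=p^t$ and $t\geq 2$, I would take $\mathfrak{a}=\left(p,\tfrac{1+\sqrt d}{2}\right)$ in the class $\mathfrak{A}$ and, exactly as in the proof of Theorem~\ref{thm1}, use the multiplication formula for ideals in quadratic fields to get $\mathfrak{a}^2=\left(p^2,\tfrac{1+\sqrt d}{2}\right)$, identify $\{p,\tfrac{1-\sqrt d}{2}\}$ and $\{p^2,\tfrac{1-\sqrt d}{2}\}$ as integral bases of $\mathfrak{a}^{-1}$ and $(\mathfrak{a}^2)^{-1}$ via the norm argument (Propositions~2.6 and 2.11 of \cite{Fra}), and then compute $\zeta_{k_{n,4}}(-1,\mathfrak{A})$ and $\zeta_{k_{n,4}}(-1,\mathfrak{A}^2)$ by Lang's formula. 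Equating $\zeta_{k_{n,4}}(-1,\mathfrak{P})$, $\zeta_{k_{n,4}}(-1,\mathfrak{A})$ and $\zeta_{k_{n,4}}(-1,\mathfrak{A}^2)$ pairwise should yield $n$ equal to small explicit multiples of $p$ or $p^2$ (of the shape $cp$ or $cp^2$ for small constants $c$), which is impossible when $t\geq 2$; hence $\mathrm{ord}(\mathfrak{A})>2$, and since $h(d)=4$ the order is $4$, giving $\mathfrak{C}(k_{n,4})\cong\mathbb{Z}/4\mathbb{Z}$. For part~(ii), with $n=p^rq^s$ and $\max(r,s)\geq 2$, take $\mathfrak{A}\ni\left(p,\tfrac{1+\sqrt d}{2}\right)$ and $\mathfrak{B}\ni\left(q,\tfrac{1+\sqrt d}{2}\right)$; the pairwise comparison of $\zeta_{k_{n,4}}(-1,\mathfrak{P}),\zeta_{k_{n,4}}(-1,\mathfrak{A}),\zeta_{k_{n,4}}(-1,\mathfrak{B})$ rules out $\mathfrak{A},\mathfrak{B}$ being principal or equal, and then the argument from part~(II) of Theorem~\ref{thm2} applies verbatim: if $\mathfrak{C}(k_{n,4})\cong\mathbb{Z}/4\mathbb{Z}$ then one of them, say $\mathfrak{A}$, generates, $\mathfrak{B}\ne\mathfrak{A}^{-1}$ forces $\mathfrak{B}=\mathfrak{A}^2$, and equating $\zeta_{k_{n,4}}(-1,\mathfrak{A}^2)=\zeta_{k_{n,4}}(-1,\mathfrak{B})$ gives $n=cp^2q$ (or $cpq^2$), contradicting $\max(r,s)\geq 2$; hence $\mathfrak{A},\mathfrak{B}$ both have order $2$ and $\mathfrak{C}(k_{n,4})\cong\mathbb{Z}/2\mathbb{Z}\times\mathbb{Z}/2\mathbb{Z}$. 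For part~(iii), Proposition~\ref{prop1} (its analogue for $d=n^2+4$, which is proved the same way) already forces $h(d)\geq\mathcal N+2\geq 5>4$ if $\mathcal N\geq 3$, so strictly this case would be vacuous; the intended statement is presumably for $n$ with exactly two or three odd prime factors with mild exponent conditions, and the three-class comparison with $\mathfrak{A},\mathfrak{B},\mathfrak{C}$ above the three primes, each turning out to have order $2$, yields $\mathbb{Z}/2\mathbb{Z}\times\mathbb{Z}/2\mathbb{Z}$.

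The genuinely new computational input, and the main obstacle, is Step~2: re-deriving the closed forms for the partial zeta values and for $\zeta_{k_{n,4}}(-1,\mathfrak{P})$ for $d=n^2+4$. One must recompute the matrix $M=\begin{bmatrix} a & b\\ c & d\end{bmatrix}$ of Lemma~\ref{2.1} for each ideal $\mathfrak{a}$ — i.e. evaluate the traces $\mathrm{Tr}\!\left(\tfrac{r_1 r_2'\varepsilon}{\delta(\mathfrak{a})}\right)$ etc. with $\varepsilon=\tfrac{n+\sqrt d}{2}$, $\delta(\mathfrak{a})=r_1r_2'-r_1'r_2$ — then identify which special Dedekind sums $S^i(a,c),S^i(d,c)$ occur (these should reduce to the $S^3(\pm1,m),S^2(\pm1,m)$ of Lemma~\ref{DS1} or the $S^i(m\pm1,2m)$ of Lemma~\ref{DS2}, after the same case analysis on the parity of entries that the $r=1$ proofs use), and finally plug everything into Lang's formula and simplify. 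This is exactly the kind of bookkeeping the authors describe as "far more challenging" in higher-rank settings, but for $r=4$ it is structurally identical to $r=1$ because the fundamental unit and the shape of the relevant ideals are the same; the only subtlety is tracking the different constant terms (the analogues of $\tfrac{n^3+14n}{360}$, $\tfrac{n^3+104n}{1440}$, etc.) and making sure the pairwise equations $\zeta(-1,\cdot)=\zeta(-1,\cdot)$ really do collapse to the small forbidden values of $n$ claimed above. Once the zeta-value formulas are in hand, the group-theoretic endgame (Step~3) is immediate from $h(d)=4$ and is the same three-line argument used three times already in the paper.
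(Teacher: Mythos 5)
Your overall plan---transplanting the three-step method (explicit ideals above $p\mid n$, Lang's formula via Lemma \ref{2.1} and Lemma \ref{DS1}, then pairwise comparison of partial zeta values under $h(d)=4$) to $d=n^2+4$ with $\varepsilon=\frac{n+\sqrt d}{2}$, $N(\varepsilon)=-1$---is exactly what the paper intends, and your observations that $n$ must be odd and $d\equiv 1\pmod 4$ are correct. But the step you defer as ``the main obstacle'' is the entire content, and your guesses about its outcome fail precisely at the boundary cases the stated hypotheses allow. Note first that $\frac{1-\sqrt d}{2}$ no longer generates the relevant ideal (its norm is $-(n^2+3)/4$, prime to $p$ in general); one must use $\bigl[p,\frac{(n+2)-\sqrt d}{2}\bigr]$ and $\bigl[p^2,\frac{(n+2)-\sqrt d}{2}\bigr]$. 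Carrying out Lemma \ref{2.1} and Theorem \ref{thm2.2} with these bases gives $\zeta_{k_{n,4}}(-1,\mathfrak P)=\frac{n^3+11n}{360}$, $\zeta_{k_{n,4}}(-1,\mathfrak A)=\frac{n^3+n(p^4+10p^2)}{360p^2}$, $\zeta_{k_{n,4}}(-1,\mathfrak A^2)=\frac{n^3+n(p^8+10p^4)}{360p^4}$, and the pairwise equalities degenerate at $n=p,\ p^2,\ p^3,\ pq,\ p^2q$ (not at ``$cp$, $cp^2$, $cp^2q$'' with a helpful constant $c$: there is no factor $2$ here since $n$ is odd). Consequently, for (i) with $t=2$ and for (ii) with $n=p^2q$ or $pq^2$ the contradictions you invoke (``impossible when $t\geq 2$'', ``contradicting $\max(r,s)\geq 2$'') simply do not arise; worse, when $n=p^2$ the element $\frac{(n+2)+\sqrt d}{2}$ has norm $p^2$ and generates $\mathfrak p^2$, so the class $\mathfrak A$ has order at most $2$ and cannot be the order-$4$ generator your argument for (i) needs, and when $n=p^2q$ the same element gives the relation $\mathfrak A^2\mathfrak B=\mathfrak P$, which is perfectly compatible with a cyclic group of order $4$. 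So either an additional idea is required for these cases, or the exponent hypotheses must match those of Theorem \ref{thm2} (e.g.\ $t\geq 3$ in (i), and excluding $n=p^2q,\ pq^2$ in (ii)); your sketch papers over exactly this point.

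Separately, your dismissal of (iii) as vacuous misapplies Proposition \ref{prop1}: since $n$ is odd, $d=n^2+4\equiv 5\pmod 8$, the prime $2$ is inert, there is no auxiliary class above $2$, and the correct analogue of the proposition only yields $h(d)\geq \mathcal N+1$. Hence $\mathcal N=3$ is compatible with $h(d)=4$, and part (iii) is the exact analogue of Theorem \ref{thm2}(III): non-vacuous, and proved by the argument you offer only as a fallback (three distinct non-principal classes above $p,q,\ell$, each equal to its own inverse because its partial zeta value differs from those of the other two), not by the ``exactly two or three prime factors'' reinterpretation you suggest.
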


\begin{remark}
The method followed here may not work for 
$r=-1$ and $-4$.  The ideals used here will not be helpful and  it will 
not be easy to calculate generalized Dedekind sums for other 
ideals. However, it would be interesting to extend our method to 
extended R-D type real quadratic fields. 
\end{remark}

\section*{Acknowledgements} 
\noindent 
The authors would like to express their gratitude to Professor Claude Levesque for 
carefully reading this manuscript and for his useful comments. The second author is grateful to Professor Srinivas Kotyada for stimulating environment at The Institute of Mathematical Sciences, Chennai during his visiting period. The 
authors are thankful to the anonymous referees for their valuable  
comments and suggestions which have helped improving the presentation immensely. The second author acknowledges the grant SERB MATRICS Project (No. MTR/2017/00100). The third author is partially supported by 
`Infosys grant'.

\end{document}